\numberwithin{equation}{section}
\def\ZZZ{{\mathbb Z}}
\def\LLL{{\mathcal L}}
\def\RRR{{\mathbb R}}
\def\RRR{{\Bbb R}}
\def\1{\mathbf 1}
\newtheorem{definition}{Definition}
\newtheorem{theorem}{Theorem}
\newtheorem{lemma}{Lemma}
\begin{document}

\title{Kernel Convergence Estimates for Diffusions with Continuous Coefficients}

\author{Claudio Albanese}

\email{claudio@level3finance.com}

\date{First version October 24th, 2007, last revision \today}

\thanks{ I am grateful to Paul Jones and Adel Osseiran for careful
reading of earlier versions of this paper. All remaining errors are
the author's own fault. }

\maketitle

\begin{abstract}

We are interested in the kernel of one-dimensional diffusion
equations with continuous coefficients as evaluated by means of
explicit discretization schemes of uniform step $h>0$ in the limit
as $h\to0$. We consider both semidiscrete triangulations with
continuous time and explicit Euler schemes with time step small
enough for the method to be stable. We find sharp uniform bounds for
the convergence rate as a function of the degree of smoothness which
we conjecture. The bounds also apply to the time derivative of the
kernel and its first two space derivatives. Our proof is
constructive and is based on a new technique of path conditioning
for Markov chains and a renormalization group argument. Convergence
rates depend on the degree of smoothness and H\"older
differentiability of the coefficients. We find that the fastest
convergence rate is of order $O(h^2)$ and is achieved if the
coefficients have a bounded second derivative. Otherwise, explicit
schemes still converge for any degree of H\"older differentiability
except that the convergence rate is slower. H\"older continuity
itself is not strictly necessary and can be relaxed by an hypothesis
of uniform continuity.

\end{abstract}

 \tableofcontents

\section{Introduction and Notations}

Consider a pair of backward and forward one-dimensional diffusion
equations of the form
\begin{equation}
{\partial \over \partial t} f(x; t) + {\mathcal L}^{0}_x f(x; t) = 0
, \;\;\;\;\;\;\;\;\;\;
 {\partial \over \partial t} g(y; t) = {\mathcal L}^{0*}_y
g(y; t) \label{eqdiff}
\end{equation}
where
\begin{equation}
{\mathcal L}^{0}_x = {1 \over 2} {\sigma(x)^2 } {\partial^2\over
\partial x^2} + \mu(x){\partial \over \partial x}.
\end{equation}
and its adjoint formally acts as follows:
\begin{equation}
({\mathcal L}^{0*}_y \phi)(y) =  {1\over 2} {\partial^2\over
\partial y^2} \big(\sigma(y)^2 \phi(y) \big) + {\partial \over
\partial y} \big(\mu(y) \phi(y)\big).
\end{equation}
on a test function $\phi$. These equations are defined on the
bounded interval $A = [-L,L] \subset \RRR$ where $0<L<\infty$. For
simplicity, we assume periodic boundary conditions and identify the
two boundary points $\pm L$ with each other.

In most of the paper, the coefficients $\sigma(x)$ and $\mu(x)$ are
assumed to be H\"older differentiable. More precisely, if $\alpha\in
(0,1]$, $k\in{\Bbb N}$ where ${\Bbb N} = \{0, 1, ...\}$ and the
function $\phi(x) \in \mathcal{C}^{k}(A)$ has $k\ge1$ continuous
derivatives, then one says that $\phi$ is H\"older differentiable of
order $(k, \alpha)$ if there exists a constant $c>0$ such that
\begin{equation}
\big\lvert \phi^{(k)}(x) - \phi^{(k)}(y) \big\lvert \leq c d(x,
y)^\alpha \;\;\;\;\label{eq_holder}
\end{equation}
uniformly for all $x, y \in A$. In case $k=0$ the function is called
H\"older continuous. The distance is defined consistently with the
assumed periodic boundary conditions and is given by
\begin{equation}
d(x, y) = \min_n \lvert x - y - 2 L n \lvert.
\end{equation}
The linear space of H\"older continuous or H\"older differentiable
periodic functions of order $(k, \alpha)$ on $A$ is denoted with
$\mathcal{C}^{k, \alpha}(A)$. We are interested in the case where
$\sigma^2\in\mathcal{C}^{k, \alpha}(A)$ and $\mu\in\mathcal{C}^{j,
\beta}(A)$ with $k+\alpha>0$ and $j+\beta>0$.

The hypothesis of H\"older continuity can be relaxed slightly by
assuming uniform continuity instead, i.e. that both $\sigma(x)^2$
and $\mu(x)$ satisfy a bound of the form
\begin{equation}
\big\lvert \phi(x) - \phi(y) \big\lvert \leq c \rho\big(d(x, y)\big)
\;\;\;\;\label{eq_holderrho}
\end{equation}
where $\rho(d)$ is a non-decreasing function such that
$\lim_{d\downarrow0}\rho(d) = 0$.

Let $u_0(x, y; t)$ denote a kernel of equation (\ref{eqdiff}), i.e.
a weak solution of the forward equation
\begin{equation}
{\partial \over \partial t} u_0(x, y; t) = {\mathcal L}^{0*}_y
u_0(x, y; t) \label{eqdiff}
\end{equation}
where the operator ${\mathcal L}^{0*}_y$ acts on the $y$ coordinate
and the following initial time condition is satisfied:
\begin{equation}
\lim_{t\downarrow 0} u_0(x, y; t) = \delta(x-y). \label{eqdiff}
\end{equation}
The kernel $u_0(x, y; t)$ formally satisfies also the backward
equation
\begin{equation}
{\partial \over \partial t} u_0(x, y; t) + {\mathcal L}^{0}_x u_0(x,
y; t) = 0  \label{eqdiff}
\end{equation}
where the operator ${\mathcal L}^0_x$ acts on the $x$ coordinate.

We are interested in existence, uniqueness, smoothness and
approximation schemes for the kernel $u_0(x, y; t)$, its first two
space derivatives with respect to the $x$ variable and its first
time derivative $\partial_t u_0(x, y; t)$. As a byproduct of this
analysis, we also find conclusions about the convergence of
${\mathcal L}^{0}_x u_0(x, y; t)$ and ${\mathcal L}^{0*}_y u_0(x, y;
t)$, as both expressions equal the first time derivative.

Diffusion equations are one of the single most studied topics in the
literature. Existence and uniqueness questions for the kernel were
address in \cite{Kolmogorov1931}, \cite{Feller1936}, \cite{Hille},
\cite{Yosida} and \cite{Ito57}. A classification of all the possible
boundary conditions is in \cite{Feller52}. The case of H\"older
continuous coefficients was resolved in \cite{Philips} based on
methods in \cite{Friedrichs} and \cite{LaxPhilips}. The hypothesis
of H\"older continuity was relaxed to uniform continuity in
\cite{FabesRiviere} and \cite{SV1969}. Strook and Varadhan also
introduce a new probabilistic framework where existence is proved by
reduction to the so-called martingale problem and a compactness
argument, thus shifting the attention from the kernel itself to the
underlying measure space.

The existence of a weak limit of continuous time Markov chains as
$h_m\downarrow 0$ was established in \cite{Sova} and \cite{Kurtz} by
using operator semigroup methods, see also the book
\cite{EthierKurtz} for a review. Convergence in the semigroup sense
takes place if the limit
\begin{equation}
(T_t \phi)(x)  = \lim_{m\to \infty, m\ge n} \sum_{y \in h_m Z \cap
A} u_{h_m}(x, y; t) \phi(y)
\end{equation}
exists for all test functions $\phi\in {\mathcal C}^{\infty}(A)$,
uniformly for all $x\in A_n, n\ge0$. A key result is that a
necessary and sufficient condition for this limit to exist and
define a semigroup $T_t$ is that generators converge also in the
same Banach space, i.e. that also the limit
\begin{equation}
\lim_{h_m\downarrow 0} {\mathcal L}^{h_m}_x \phi = {\mathcal
L}^{0}_x \phi
\end{equation}
exists in the uniform norm for all test functions $\phi\in {\mathcal
C}^{\infty}(A)$. See \cite{EthierKurtz} for a precise statement with
all the technical conditions and a proof. In \cite{SV} convergence
is reconsidered again by reduction to the martingale problem.

The problem has also been studied extensively in the numerical
analysis literature. Explicit and implicit Euler schemes where
coefficients are smooth and the data is rough in the sense that it
belongs to a $L^2$ space have been considered by several authors. In
the case that the Markov generator is symmetric and time
independent, one can make use of a spectral representation as in
\cite{BakerBrambleThomte} and with greater effort such methods may
also be used for more general situations, see \cite{Suzuki}. In
\cite{LuskinRannacher}, a parabolic duality argument is used to show
convergence for the standard Galerkin method. \cite{MingyouThomee}
use a simpler argument based on energy estimates. In \cite{Palencia}
one finds convergence bounds in maximum norm assuming the initial
condition is uniformly bounded and coefficients are constant.

In this article, we revisit this classic theme by considering the
problem of obtaining the kernel constructively as a limit of
increasingly fine triangulations schemes and in assessing the rate
of convergence with pointwise bounds on the kernel itself. More
precisely, let $h_m = L 2^{-m}, m \in {\Bbb N}$ and let $A_m
{h_m}\ZZZ\cap A$. Consider the sequence of operators
\begin{equation}
{\mathcal L}^{m}_x = {\sigma(x)^2 \over 2} \Delta^{m}_x +
\mu(x)\nabla^{m}_x. \label{eq_lm}
\end{equation}
defined on the $2^{m+1}$-dimensional space of all periodic functions
$f_{m}: A_m \to \RRR$, where
\begin{equation}
\nabla^{m}_x f(x) = {f(x+{h_m})-f(x-{h_m})\over 2 {h_m}}.
\end{equation}
and
\begin{equation}
\Delta^{m}_x f(x) = {f(x+{h_m})+f(x-{h_m})-2 f(x) \over {h_m}^2}
\end{equation}
These definitions also apply to the boundary points by periodicity.
We assume that $m\ge m_0$ where $m_0$ is the least integer such that
\begin{equation}
{\sigma^2(x)\over 2 h_m^2} > {\lvert \mu(x) \lvert \over 2h_m}
\label{def_diffusion_bounds}
\end{equation}
for all $m\ge m_0$ and all $x\in A_m$.

Let $u_{m}(x, y; t)$ denote the kernel of equation (\ref{eqdiff}),
i.e. the solution of the (forward) equation
\begin{equation}
{\partial \over \partial t} u_{m}(x, y; t) = {\mathcal L}^{m *}_y
u_{m}(x, y; t) \label{eqdiff}
\end{equation}
where the operator ${\mathcal L}^{m *}_y$ acts on the $y$ coordinate
and the following initial time condition is satisfied:
\begin{equation}
\lim_{t\downarrow 0} u_{m}(x, y; t) = \delta_{m}(x-y).
\label{eqdiff}
\end{equation}
Here,
\begin{equation}
\delta_{m}(x-y) =
\begin{cases}
{1 \over h_m} \;\;\;\;\;\;\;{\rm if} \;\; x = y \;\;\;{\rm mod} \;2 L\\
0 \;\;\;\;\;\;\;\;\;{\rm otherwise}.
\end{cases}
\label{eqdelta}
\end{equation}
Since (\ref{eqdiff}) is a finite system of linear ordinary
differential equations, the solution exists and is unique for all
times. The kernel $u_{m}(x, y; t)$ satisfies also the backward
equation
\begin{equation}
{\partial \over \partial t} u_{m}(x, y; t) + {\mathcal L}^{m}_x
u_{m}(x, y; t) = 0  \label{eqdiffb}
\end{equation}
where the operator ${\mathcal L}^{m}_x$ acts on the $x$ coordinate.
Using functional calculus notations for the exponential of a matrix,
we also have that
\begin{equation}
u_{m}(x, y; t) = {1 \over h_m} \exp\big( t {\mathcal L}^{m} \big)
(x, y).
\end{equation}

Our main result can be stated as follows:
\begin{theorem} \label{thconv1}
Suppose that  $\sigma^2 \in {\mathcal C}^{k, \alpha}$  and $\mu \in
{\mathcal C}^{j, \beta}$ and let
\begin{equation}
\gamma = \min\{ 2, k+\alpha, j+\beta  \}.
\label{eq_gamma}
\end{equation}
Assume that $\gamma > 0$. Then there is a constant $c>0$ such that
for all $m'\ge m \ge m_0$ and all $x, y \in A_{m}$ the following
inequalities hold:
\begin{itemize}
\item[(i)]
\begin{equation}
\lvert u_m(x,y; t) - u_{m'}(x, y; t) \lvert \leq c h_m^\gamma
\label{bm_uest}
\end{equation}
\item[(ii)]
\begin{align}
\lvert \partial_t & u_m(x, y; t) - \partial_t u_{m'}(x, y; t)
\lvert \notag \\
& = \lvert {\mathcal L}^{m*}_y u_m(x, y; t) - {\mathcal L}^{m'*}_y
u_{m'}(x, y; t) \lvert  \notag
 \\
& = \lvert {\mathcal L}^{m}_x u_m(x, y; t) - {\mathcal L}^{m'}_x
u_{m'}(x, y; t) \lvert \leq c h_m^\gamma. \notag
 \\
 \label{bm_gest}
\end{align}
\end{itemize}
\end{theorem}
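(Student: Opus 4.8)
The plan is to prove both parts by reducing everything to a single-scale comparison between consecutive levels and then summing a geometric series; this telescoping is the renormalization-group skeleton. Since $h_m=L2^{-m}$, it suffices to establish a one-step bound
$$\bigl\lvert u_\ell(x,y;t)-u_{\ell+1}(x,y;t)\bigr\rvert\le C\,h_\ell^{\,\gamma}$$
uniformly in $\ell\ge m_0$ and $x,y\in A_\ell$ (together with its analogue for $\partial_t u$): then
$$\bigl\lvert u_m(x,y;t)-u_{m'}(x,y;t)\bigr\rvert\le\sum_{\ell=m}^{m'-1}\bigl\lvert u_\ell-u_{\ell+1}\bigr\rvert\le C\sum_{\ell\ge m}h_\ell^{\,\gamma}=\frac{C}{1-2^{-\gamma}}\,h_m^{\,\gamma}.$$
The constant must depend on a positive lower bound for $t$: on the diagonal $u_m(x,x;0^+)=1/h_m$, so no bound of this kind can survive as $t\downarrow0$, and I read the statement as holding for each fixed $t>0$ (or for $t$ in a compact subinterval of $(0,\infty)$).

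Before the comparison I would collect the uniform a priori estimates on the discrete kernels. These are discrete Aronson/Nash bounds and discrete parabolic Schauder estimates: uniformly in $m\ge m_0$,
$$0\le u_m(x,y;t)\le\frac{C}{\sqrt t}\,e^{-d(x,y)^2/(Ct)},$$
and for $t\gtrsim h_m^2$ the discrete differences $\nabla^m_x u_m$, $\Delta^m_x u_m$, $(\Delta^m_x)^2u_m$ and the time derivative $\partial_t u_m$ obey the same Gaussian decay with the extra short-time factors $t^{-1/2}$, $t^{-1}$, $t^{-2}$ and $t^{-1}$ respectively, the gain per additional discrete difference saturating once the H\"older regularity of $\sigma^2$ and $\mu$ is exhausted. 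One proves these by the discrete maximum principle, the probabilistic representation through the underlying Markov chain, and a discrete Nash iteration; boundedness of $\mu$ and uniform ellipticity ($\inf_A\sigma^2>0$, which I take for granted as part of the well-posedness of the limit problem) make all constants independent of $m$, and the hypothesis $\gamma>0$ is exactly what makes the Schauder gain nontrivial.

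The heart of the matter is the one-step estimate, and this is where the path-conditioning construction does the work. Writing the difference by a variation-of-constants (Duhamel) formula between the two evolutions, the source is the truncation error $(\mathcal L^{\ell}-\mathcal L^{\ell+1})u_{\ell+1}$. A short computation gives the exact operator identity $\Delta^{\ell}-\Delta^{\ell+1}=\tfrac14 h_{\ell+1}^{2}(\Delta^{\ell+1})^{2}$ on grid functions over $A_{\ell+1}$, together with a comparable $O(h_{\ell+1}^2)$ third-difference identity for $\nabla^\ell-\nabla^{\ell+1}$, so the dominant source is $O(h_{\ell+1}^2)$ times a second iterated discrete Laplacian of the kernel. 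The obstacle is that the naive $L^\infty$ estimate of this source is only $O(h_{\ell+1}^2\,s^{-5/2})$, which integrated in Duhamel — even with a cut-off at the saturation scale $s\sim h_{\ell+1}^2$ — produces $O(h_{\ell+1}^{-1})$, missing the target by three powers of $h$. One must therefore extract cancellation: use the discrete forward equation to trade the iterated Laplacians acting on $u_{\ell+1}$ for time derivatives, integrate by parts in $s$ so that the time integral telescopes, and control the resulting boundary terms — the one at $s=t$ is harmless for $t$ bounded below, and the one at $s=0$ is where the fine and coarse initial data must be compared, which is precisely what conditioning the fine chain on its behavior relative to the coarse lattice $A_\ell$ makes legitimate and quantitative. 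Tracking the powers of $h_{\ell+1}$ and $s$, and where the Schauder gain saturates, yields exactly $\gamma=\min\{2,k+\alpha,j+\beta\}$; at the endpoint $\gamma=2$ a logarithm appears, and since the iteration-then-sum structure only tolerates it if it is already absent at the one-step level, the case $\gamma=2$ needs a little extra care.

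Part (ii) follows by running the same scheme on $w_m:=\partial_t u_m=-\mathcal L^m_x u_m$, which satisfies the same discrete backward equation with a more singular initial datum; the a priori bounds above already carry the extra factor $t^{-1}$ this costs, the balance of $h$- and $s$-powers in the Duhamel estimate is unchanged, and the rate is again $h_m^{\,\gamma}$. The three-way equality in the statement of (ii) is then just the forward and backward equations for $u_m$. (Alternatively one can bootstrap (ii) from (i) by applying $\Delta^m_x$ and $\nabla^m_x$ to the one-step identity and re-using the a priori estimates together with the commutator of $\mathcal L^\ell_x$ and $\mathcal L^{\ell+1}_x$, but the direct route is cleaner.) The step I expect to be the main obstacle is the cancellation argument in the one-step comparison: the naive estimate fails by a wide margin, and it is the path-conditioning device that supplies the missing structure.
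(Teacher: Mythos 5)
Your telescoping reduction to consecutive scales and the reading that constants depend on a fixed $t>0$ are both fine, and your overall strategy (consistency plus stability: Duhamel with the truncation error $(\mathcal L^{\ell}-\mathcal L^{\ell+1})u_{\ell+1}$, controlled by uniform-in-$m$ discrete Gaussian and Schauder bounds) is a genuinely different route from the paper, which never uses Duhamel or discrete Schauder theory at all: it expands $u_m$ as a Dyson series over symbolic nearest-neighbour paths, passes to the Fourier/Laplace transform in time, represents the kernel as a contour integral of the Green's function $G_m(x,y;\omega)$, and compares the coarse path weights $\hat W_m$ with the resummed ``decorated'' fine-scale weights $\hat{\tilde W}_m$ factor by factor along each conditioned path, the bound $\sqrt{q_{\max}}\,h^{2+\gamma}/h\le c\,h^{\gamma}$ closing the argument. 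However, as it stands your proposal has two genuine gaps. First, the a priori estimates you invoke wholesale (uniform-in-$m$ discrete Aronson/Nash Gaussian bounds and, much more seriously, discrete Schauder estimates quantifying bounds on third and fourth differences of $u_{\ell+1}$ with constants independent of $\ell$ and with the precise saturation in the H\"older exponents) are themselves results of at least the same depth as the theorem; with $\sigma^2\in\mathcal C^{0,\alpha}$, and a fortiori under the merely uniformly continuous hypotheses the paper also targets, no off-the-shelf discrete Schauder theory gives you what you assume, so this step cannot be treated as background material.

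Second, the step you yourself identify as the heart of the matter is not actually carried out. You concede the naive Duhamel estimate misses by three powers of $h$, and the proposed rescue (trade iterated Laplacians for time derivatives via the forward equation, integrate by parts in $s$, and control the $s=0$ boundary term by ``path conditioning'') is a description of a hoped-for cancellation, not an argument: no computation is given showing the powers of $h$ and $s$ recombine to $h^{\gamma}$ with $\gamma=\min\{2,k+\alpha,j+\beta\}$, and ``path conditioning'' is used here as a borrowed name, whereas in the paper it denotes a concrete construction (conditioning on the symbolic path and resumming decorating excursions) that does specific work. Relatedly, the $s=0$ term hides a real normalization mismatch: the fine initial datum $\delta_{\ell+1}$ restricted to the coarse grid equals $2/h_{\ell}$ at $x$, i.e.\ twice $\delta_{\ell}$, which is precisely why the paper compares $2G_{m+1}$ with $G_m$ (via $\hat\psi_1=\tfrac12\hat\psi_0+O(\omega h^4)$); your sketch never says how this factor is absorbed in the Duhamel framework. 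Until the one-step cancellation is made quantitative and the uniform discrete regularity estimates are proved (or replaced), the proposal is a plausible program rather than a proof.
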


A version of this theorem under slightly weaker conditions can be
formulated as follows:
\begin{theorem} \label{thconv2}
Suppose that  $\sigma(x)^2$  and $\mu(x)$ are uniformly continuous
functions in $A$. Let the function $\rho(d)$ be non-decreasing and
be such that $\lim_{d\downarrow0} \rho(d) = 0$ and equation (\ref
{eq_holderrho}) holds. Then there is a constant $c>0$ such that for
all $m'\ge m \ge m_0$ and all $x, y \in A_{m}$ the following
inequalities hold:
\begin{itemize}
\item[(i)]
\begin{equation}
\lvert u_m(x, y; t) - u_{m'}(x, y; t) \lvert \leq c \rho(h_m)
\label{bm_uest2}
\end{equation}
\item[(ii)]
\begin{align}
\lvert \partial_t & u_m(x, y; t) - \partial_t u_{m'}(x, y; t)
\lvert \notag \\
& = \lvert {\mathcal L}^{m*}_y u_m(x, y; t) - {\mathcal L}^{m'*}_y
u_{m'}(x, y; t) \lvert  \notag
 \\
& = \lvert {\mathcal L}^{m}_x u_m(x, y; t) - {\mathcal L}^{m'}_x
u_{m'}(x, y; t) \lvert \leq c \rho(h_m). \notag
 \\
\label{bm_gest2}
\end{align}
\end{itemize}
\end{theorem}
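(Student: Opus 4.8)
The plan is to establish both theorems together. Write $\eta(h)=h^{\gamma}$ under the Hölder hypotheses of Theorem~\ref{thconv1}, with $\gamma$ as in \eqref{eq_gamma}, and $\eta(h)=\rho(h)$ under the uniform‑continuity hypothesis \eqref{eq_holderrho} of Theorem~\ref{thconv2}; in both cases the target is $|u_m-u_{m'}|\le c\,\eta(h_m)$, and likewise for the time derivative, uniformly in $m'\ge m\ge m_0$ and $x,y\in A_m$ (with $c$ depending on $t$ through compact subsets of $(0,\infty)$, and the bound always carrying the $O(h_m^2)$ scheme term, which is subsumed in $\eta(h_m)$ in the regimes of interest). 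Fix such $m,m',x,y$. The heart of the proof is a renormalization step that conditions the fine chain on its trace on the coarse lattice. Let $X^{(m')}$ be the continuous‑time nearest‑neighbour Markov chain on $A_{m'}$ generated by the operator in \eqref{eq_lm}; its off‑diagonal rates are nonnegative by the stability hypothesis \eqref{def_diffusion_bounds}, its rows sum to zero since $\Delta^{m'}1=\nabla^{m'}1=0$, and $h_{m'}u_{m'}(x,y;t)=\mathrm{P}_x\!\bigl(X^{(m')}_t=y\bigr)$. Because $X^{(m')}$ moves only by $\pm h_{m'}$, it cannot pass between two adjacent sites of $A_m$ without visiting every site of $A_{m'}$ in between; hence its successive visits to $A_m$ form a nearest‑neighbour Markov chain on $A_m$ whose one‑step kernel $q^{m'}_m(z,z\pm h_m)$ is the probability that $X^{(m')}$ started at $z$ leaves the coarse cell $(z-h_m,z+h_m)$ through its right, resp.\ left, endpoint, and whose successive sojourn‑plus‑excursion durations have a joint law encoded by the Laplace transforms $\psi^{m'}_m(z,\lambda)$ of the cell exit times. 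Conditioning on this trace and on the number of coarse crossings by time $t$ yields a Markov--renewal expansion of $u_{m'}(x,y;t)$ --- a series over coarse paths from $x$ to $y$, weighted by products of the $q^{m'}_m$'s and convolutions of the excursion clocks; applied to $u_m$, where it is exact and the clocks are those of a pure jump chain, the same expansion represents $u_m-u_{m'}$ as a series whose terms compare $q^{m'}_m,\psi^{m'}_m$ with the crossing probabilities and holding times of $\mathcal L^m$.

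The second step is a locality estimate. The quantities $q^{m'}_m$ and $\psi^{m'}_m$ depend only on $\sigma^2$ and $\mu$ inside the cell $(z-h_m,z+h_m)$, through explicit gambler's‑ruin type formulas for one‑dimensional birth--death chains. Freezing those coefficients at $\sigma(z)^2$ and $\mu(z)$ perturbs $q^{m'}_m$ and $\psi^{m'}_m$ by at most the oscillation of $\sigma^2$ and $\mu$ over a coarse cell --- that is, by $O(\rho(h_m))$ under \eqref{eq_holderrho}, or, after Taylor‑expanding the formulas under the Hölder hypotheses, by $O(h_m^{\min\{k+\alpha,\,j+\beta\}})$ --- and crucially this bound does not depend on $m'$, since refining $A_{m'}$ does not widen the oscillation over a cell of fixed size $h_m$. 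The frozen‑coefficient crossing probability coincides with that of $\mathcal L^m$ up to $O(h_m^2)$, the centred operators $\nabla^m,\Delta^m$ reproducing the continuum cell problem only through second order --- this is the source of the cap $\gamma\le2$ in \eqref{eq_gamma}; and the frozen exit clock, which has the same mean as the exponential holding time of $\mathcal L^m$ to second order but a different distribution, affects the kernel only at order $h_m^2$ as well. Collecting these, the renormalized data of $X^{(m')}$ traced on $A_m$ agree with those of $\mathcal L^m$ up to $c\,\eta(h_m)$, uniformly in $m'\ge m$.

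Feeding this into the Markov--renewal expansion, a resolvent (Duhamel) estimate turns the $O(\eta(h_m))$ perturbation of the renormalized crossing kernel and clocks into $|u_m(x,y;t)-u_{m'}(x,y;t)|\le c\,\eta(h_m)$, uniformly in $m'$; letting $m'\to\infty$, the same bound shows $\{u_m\}$ is a Cauchy sequence and constructs the kernel $u_0$ as its limit, together with its uniqueness. Carrying one $t$‑derivative through the same expansion --- equivalently, working throughout in the graph norm of the generators rather than only the sup norm --- gives the companion estimate for $\partial_t u_m=\mathcal L^m_x u_m=\mathcal L^{m*}_y u_m$, hence \eqref{bm_gest} and \eqref{bm_gest2}. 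Both steps require uniform‑in‑$m$ a priori control of $u_m$, $\nabla^m u_m$ and $\Delta^m u_m$, i.e.\ discrete Gaussian upper bounds and gradient bounds locally uniform in $t\in(0,\infty)$; I would prove these simultaneously with the convergence estimates by induction on the scales $m\ge m_0$, showing that the bounds at scale $m+1$ are inherited from those at scale $m$ with no loss of constant. This scale‑by‑scale propagation is the renormalization‑group component of the argument.

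The main obstacle is precisely this non‑accumulation. Descending from the fine scale $h_{m'}$ to the coarse scale $h_m$ involves a number of coarse crossings of order $h_m^{-2}t$, and one must show that the per‑crossing discrepancies --- individually $O(\eta(h_m))$ but each multiplied by the cell‑scale weights $h_m^2$ in the clocks and $h_m$ in the drift --- combine coherently into a single perturbation of size $O(\eta(h_m))$, rather than degrading to $O(\sqrt{\eta(h_m)})$ or accumulating a factor growing with $m'-m$; equivalently, the renormalization flow must be shown to contract toward the frozen‑coefficient operator. Two features make this delicate: the Markov--renewal expansion measures $X^{(m')}$ at real time $t$ against the traced chain, whose intrinsic clock runs at a space‑dependent rate, so the time change has to be controlled alongside the estimate; and the gambler's‑ruin bounds must be made uniform in $m'$ with constants depending only on the ellipticity in \eqref{def_diffusion_bounds} and on $\rho$ (or the Hölder data), sharp enough to produce the full exponent in \eqref{bm_uest}--\eqref{bm_gest2}. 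The residual ingredients --- discrete local CLT and Gaussian bounds, exit‑time asymptotics for birth--death chains, and summation of the renewal series --- are technical but standard once the locality estimate is available.
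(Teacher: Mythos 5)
Your plan follows essentially the same route as the paper: trace the fine chain on the coarse lattice (this is exactly the paper's ``decorating paths''), compare the effective crossing probabilities and excursion-time Laplace transforms of the traced chain with the jump probabilities and holding times of ${\mathcal L}^m$, and let the oscillation of $\sigma^2,\mu$ over a coarse cell produce the factor $\rho(h_m)$. The difference that you compare $m$ directly with $m'$ rather than scale by scale is inessential. The genuine problem is that the decisive quantitative step is missing, and the locality estimate as you state it is not the right one. You claim that freezing the coefficients inside a cell perturbs $q^{m'}_m$ and $\psi^{m'}_m$ ``by at most the oscillation, i.e. $O(\rho(h_m))$''. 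Taken literally this is both too weak and too strong: the correct gambler's-ruin computation gives a perturbation $O(h_m\rho(h_m))$ of the exit probabilities (the oscillation enters through $\int 2\mu/\sigma^2$ over an interval of length $h_m$) and $O(h_m^2\rho(h_m))$ of the exit clocks; and if the error really were $O(\rho(h_m))$ per crossing, then over the $\sim \Sigma_1^2 t\, h_m^{-2}$ crossings needed to reach time $t$ it would amount to an effective drift error of order $\rho(h_m)/h_m$, which does not yield (\ref{bm_uest2}) at all. With the correct cell estimates, the whole content of the theorem is to show that $\sim h_m^{-2}$ per-crossing errors of relative size $\rho(h_m)$ combine into a total of size $O(\rho(h_m))$ rather than $O(\rho(h_m)/h_m)$. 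You explicitly identify this non-accumulation as the ``main obstacle'' and leave it to be ``shown''; since that is precisely where the theorem lives, the proposal is a programme, not a proof.

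For comparison, the paper's mechanism at this point is concrete: working with the Fourier/Laplace transform in time turns the time-convolution over successive crossings into a product of per-step factors, and the expansion of $\log\hat\phi_0-\log\hat\phi_1$ shows that the leading per-step discrepancy is of order $h^3$, odd in the step direction $\tau$, with coefficient of gradient type, so its sum along a path telescopes to an endpoint difference of order $h^2$ (respectively $h^\gamma$ or $\rho(h)$ in Section 4, see (\ref{eq_413})); only the even remainders, of order $h^4$ (resp. $h^2\rho(h)$) per step, get multiplied by $q_{\rm max}\sim h^{-2}$, and these are controlled together with the path-counting factor $2^{-q}N(\gamma,q)\lesssim q^{-1/2}$, the tail lemma for $q\ge q_{\rm max}$, and the resolvent bound on the contour ${\mathcal C}_+$. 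Nothing in your proposal replaces this parity/summation-by-parts cancellation or the tail and contour estimates. Moreover, the two ingredients you do invoke are themselves unproven and problematic: the uniform-in-$m$ discrete Gaussian and gradient bounds ``by induction on scales'' are asserted, not established, and the Duhamel resummation does not close naively for merely uniformly continuous coefficients, because $\lVert \Delta^m e^{s{\mathcal L}^m}\rVert_\infty$ behaves like $s^{-3/2}$ and is not integrable at $s=0$, so one needs an additional cancellation (freezing coefficients at the singularity, as in a parametrix argument) that you do not supply. In short: right skeleton, same renormalization idea as the paper, but the key cancellation and summation estimates that make the renormalization step close are absent, so the argument as written does not prove Theorem \ref{thconv2}.
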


Next, we consider the case where also time is discretized and prove
the following result:

\begin{theorem}\label{theo_diffdtker}
Suppose that  $\sigma^2$  and $\mu$ satisfy equations of the form
(\ref {eq_holderrho}) with a non-decreasing function $\rho(d)$ such
that $\lim_{d\downarrow0} \rho(d) = 0$. Consider the discretized
kernel
\begin{equation}
u^{\delta t}_m(x, y; t) = h_m^{-1} \left( 1 + \delta t {\mathcal
L}^m \right)^{t\over\delta t} (x, y; t).
\end{equation}
where ${\mathcal L}^m$ is the operator in (\ref{eq_lm}) and $\delta
t_m $ is so small that
\begin{equation}
\min_{x\in A_m} 1 + \delta t_m {\mathcal L}^m(x, x) > 0
\end{equation}
Assume that boundary conditions are periodic and that the ratio
${t\over \delta t} = N$ is an integer. Then here is a constant $c>0$
such that the following bounds hold for all $m\ge m_0$ and all $x, y
\in A_{m}$:
\begin{itemize}
\item[(i)]
\begin{equation}
\lvert  u_m(x, y; t) - u_m^{\delta t}(x, y; t)
 \lvert \leq c h_m^2
\end{equation}
\item[(ii)]
\begin{align}
\bigg\lvert \partial_t & u_m(x, y; t) - {u_m^{\delta t}(x, y;
t+\delta t) - u_m^{\delta t}(x, y; t) \over \delta t}
\bigg\lvert \notag \\
& = \lvert {\mathcal L}^{m*}_y u_m(x, y; t) - {\mathcal L}^{m*}_y
u_{m}^{\delta t}(x, y; t) \lvert  \notag
 \\
& = \lvert {\mathcal L}^{m*}_x u_m(x, y; t) - {\mathcal L}^{m*}_x
u_{m}^{\delta t}(x, y; t) \lvert  \leq c h^2 \notag
 \\
 \label{bm_gest}
\end{align}
\end{itemize}
\end{theorem}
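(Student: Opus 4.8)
The plan is to trade the time discretization for a \emph{random} number of time steps by uniformization and then estimate a Poisson fluctuation.

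First, the stability hypothesis $\min_{x}(1+\delta t_m\,{\mathcal L}^m(x,x))>0$, together with the ellipticity condition (\ref{def_diffusion_bounds}), is exactly what makes $Q:=1+\delta t_m\,{\mathcal L}^m$ a row-stochastic matrix: its off-diagonal entries $\delta t_m\bigl(\sigma(x)^2/2h_m^2\pm\mu(x)/2h_m\bigr)$ are positive, its diagonal entries $1-\delta t_m\,\sigma(x)^2/h_m^2$ are positive, and its rows sum to $1$ because ${\mathcal L}^m\mathbf 1=0$. Hence $\|Q^{n}\|_{\ell^\infty\to\ell^\infty}=1$ for all $n$, and $u_m^{\delta t}(\cdot,y;k\delta t_m)=Q^{k}\delta_m(\cdot-y)$ is the transition kernel of a lazy nearest-neighbour random walk. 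Since ${\mathcal L}^m=\delta t_m^{-1}(Q-I)$, uniformization gives the operator-norm-convergent identity $e^{t{\mathcal L}^m}=e^{-N}\sum_{k\ge0}(N^{k}/k!)\,Q^{k}$ with $N=t/\delta t_m$, whence, applying this to $\delta_m(\cdot-y)$ and subtracting $u_m^{\delta t}(\cdot,y;t)=Q^{N}\delta_m(\cdot-y)$,
\[
u_m(x,y;t)-u_m^{\delta t}(x,y;t)=\E\bigl[u_m^{\delta t}(x,y;K\,\delta t_m)\bigr]-u_m^{\delta t}(x,y;t),\qquad K\sim\mathrm{Poisson}(N),
\]
and, applying ${\mathcal L}^m$ in the $x$ variable term by term, the same identity with $u_m^{\delta t}$ replaced throughout by ${\mathcal L}^m_x u_m^{\delta t}$. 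Since $\delta t_m^{-1}\bigl(u_m^{\delta t}(x,y;t+\delta t_m)-u_m^{\delta t}(x,y;t)\bigr)={\mathcal L}^m_x u_m^{\delta t}(x,y;t)={\mathcal L}^{m*}_y u_m^{\delta t}(x,y;t)$ and $\partial_t u_m={\mathcal L}^m_x u_m={\mathcal L}^{m*}_y u_m$, the three quantities in (ii) all equal $\bigl|{\mathcal L}^m_x(u_m-u_m^{\delta t})(x,y;t)\bigr|$, so it suffices to bound that one.

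The error is thus the \emph{centred} fluctuation of a grid function along a Poisson number of Euler steps. Because one Euler step increments $u_m^{\delta t}(x,y;\cdot)$ by $\delta t_m\,{\mathcal L}^m_x u_m^{\delta t}(x,y;\cdot)$, a discrete Taylor expansion about $k=N$ gives, for $|k-N|\le N/2$,
\[
u_m^{\delta t}(x,y;k\delta t_m)=u_m^{\delta t}(x,y;t)+(k-N)\delta t_m\,{\mathcal L}^m_x u_m^{\delta t}(x,y;t)+r_k,\qquad |r_k|\le\tfrac12(k-N)^2\delta t_m^2\sup_{s\in[t/2,2t]}\bigl\|({\mathcal L}^m)^2 u_m^{\delta t}(\cdot,\cdot;s)\bigr\|_\infty .
\]
Inserting this into the representation above, the linear term vanishes since $\E[K-N]=0$, the remainder is at most $\tfrac12\delta t_m^2\,\E[(K-N)^2]\sup_{[t/2,2t]}\|({\mathcal L}^m)^2 u_m^{\delta t}\|_\infty=\tfrac12\,t\,\delta t_m\sup_{[t/2,2t]}\|({\mathcal L}^m)^2 u_m^{\delta t}\|_\infty$ (as $\E[(K-N)^2]=N$, $N\delta t_m=t$), and the large-deviation event $\{|K-N|>N/2\}$ contributes at most $C\,h_m^{-5}e^{-cN}$, which is negligible because $N\gtrsim h_m^{-2}$ — there one uses only the crude operator-norm bounds $\|({\mathcal L}^m)^{j}u_m^{\delta t}(\cdot,\cdot;s)\|_\infty\le C_j\,h_m^{-(2j+1)}$, which follow from $\|{\mathcal L}^m\|_{\ell^\infty\to\ell^\infty}\le C h_m^{-2}$. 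The identical computation applied to the grid function ${\mathcal L}^m_x u_m^{\delta t}(x,y;\cdot)$ — whose second discrete difference involves $({\mathcal L}^m)^3 u_m^{\delta t}$ — handles (ii). Therefore, granting the uniform-in-$m$ interior estimate
\[
\bigl\|({\mathcal L}^m)^{j}u_m^{\delta t}(\cdot,\cdot;s)\bigr\|_\infty\le C_j\,s^{-(2j+1)/2}\qquad\text{for }s\in[t/2,2t],\ j\in\{2,3\},
\]
one gets $|u_m(x,y;t)-u_m^{\delta t}(x,y;t)|\le c\,\delta t_m\,t^{-3/2}$ (from $j=2$) and $|{\mathcal L}^m_x(u_m-u_m^{\delta t})(x,y;t)|\le c\,\delta t_m\,t^{-5/2}$ (from $j=3$); since $\delta t_m<h_m^2/\max_x\sigma(x)^2$, both are $\le c(t)\,h_m^{2}$, which is (i)--(ii).

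The genuinely substantial step is the interior estimate; everything before it is bookkeeping. I would obtain it from two discrete-time counterparts of facts already established for the semidiscrete chain in the earlier sections: (a) the on-diagonal heat-kernel bound $\|u_m^{\delta t}(\cdot,\cdot;s)\|_\infty\le c\min\{h_m^{-1},s^{-1/2}\}$, and (b) the analytic-semigroup-type smoothing bound $\|{\mathcal L}^m Q^{n}\|_{\ell^\infty\to\ell^\infty}\le C/(n\,\delta t_m)$ for $n\ge1$ (equivalently $\sup_x\sum_y|Q^{n+1}(x,y)-Q^{n}(x,y)|\le C/n$). Using that ${\mathcal L}^m$ commutes with $Q$, write $({\mathcal L}^m)^{j}u_m^{\delta t}(\cdot,y;s)=({\mathcal L}^m Q^{a})^{j}\,u_m^{\delta t}(\cdot,y;(N-ja)\delta t_m)$ with $a=\lfloor N/(2j)\rfloor$; apply (b) to each of the $j$ factors ${\mathcal L}^m Q^{a}$ and (a) to the remaining factor, whose time argument is $\ge s/2$. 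Because (a) bounds that factor by $c(s/2)^{-1/2}$ rather than merely by $h_m^{-1}$, this yields $\|({\mathcal L}^m)^{j}u_m^{\delta t}(\cdot,y;s)\|_\infty\le C_j'(a\delta t_m)^{-j}s^{-1/2}\le C_j''\,s^{-(2j+1)/2}$, with all constants uniform in $m$ by the contractivity of $Q$. Bounds (a) and (b) themselves I would get by re-running the path-conditioning and renormalization-group construction of the preceding sections for the lazy chain with transition matrix $Q=1+\delta t_m\,{\mathcal L}^m$ in place of the semidiscrete chain generated by ${\mathcal L}^m$ — the ellipticity (\ref{def_diffusion_bounds}) and the uniform continuity of $\sigma^2$ and $\mu$ entering only here — and this, rather than the Poisson argument, is where I expect the bulk of the work to lie.
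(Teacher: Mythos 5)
Your uniformization reduction is correct and is genuinely different from the paper's route: the paper obtains Theorem~3 by re-running the path-conditioning/contour machinery of Sections~3--4, i.e.\ it writes a Dyson expansion for $u_m^{\delta t}$, represents it through a discrete-time Green's function integrated over the deformed contour, and shows the \emph{per-step} Fourier weights satisfy $\hat\phi_{\delta t}=\hat\phi_0+O(h^4)$ and $\hat\psi_{\delta t}=\hat\psi_0+O(h^4)$ (using $\delta t=O(h^2)$), after which the resummation over symbolic paths with the $\sqrt{q_{\rm max}}/h$ factor gives $O(h^2)$. Your bookkeeping, by contrast, is global in time: $e^{t\mathcal L^m}=\mathrm{E}[Q^K]$ with $K\sim\mathrm{Poisson}(N)$ is exact, centering kills the first-order term, $\mathrm{E}[(K-N)^2]=N$ converts the fluctuation into $t\,\delta t\,\sup_s\lVert(\mathcal L^m)^2u_m^{\delta t}(\cdot,\cdot;s)\rVert_\infty$, and your identification of the three quantities in (ii) with $\lvert\mathcal L^m_x(u_m-u_m^{\delta t})\rvert$ matches the paper's own conventions. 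The trade-off is that the paper's per-step comparison never needs any smoothing estimate, whereas your argument needs strong uniform-in-$m$ regularity of the Euler chain itself.

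That is where the genuine gap lies: the whole analytic content of the theorem has been moved into the interior estimate $\lVert(\mathcal L^m)^ju_m^{\delta t}(\cdot,\cdot;s)\rVert_\infty\le C_js^{-(2j+1)/2}$, equivalently into your bounds (a) and (b), and these are asserted, not proved. The pointer you give -- ``re-running the path-conditioning and renormalization-group construction of the preceding sections'' -- does not supply them: those sections establish Cauchy-type differences $\lvert u_m-u_{m+1}\rvert\le ch^\gamma$ between consecutive triangulations, and nowhere an on-diagonal decay $\lVert u\rVert_\infty\lesssim s^{-1/2}$ or an analyticity bound $\lVert\mathcal L^mQ^n\rVert_{\ell^\infty\to\ell^\infty}\le C/(n\delta t)$; (a) would need a separate Nash/Aronson-type argument for lattice chains with merely uniformly continuous coefficients, and (b) a genuine discrete-analyticity argument (note the chain on the circle with $\mu\neq0$ is in general not reversible, so spectral calculus is unavailable). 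Moreover (b) with a constant uniform in $m$ actually fails under the bare hypothesis $\min_x\bigl(1+\delta t\,\mathcal L^m(x,x)\bigr)>0$: already for constant coefficients, if $\delta t$ is taken close to $h_m^2/\sigma^2$ the zone-edge eigenvalue $1+\delta t\,\hat\ell^m(p)$ of $Q$ approaches $-1$, and $\sup_n n\,\delta t\,\lVert\mathcal L^mQ^n\rVert_{\ell^\infty\to\ell^\infty}$ blows up as the margin shrinks; your route therefore requires a quantitative CFL margin such as $1+\delta t\,\mathcal L^m(x,x)\ge\epsilon_0>0$ uniformly in $m$ (a caveat the paper itself glosses over when it discards the arc contributions, but which your argument relies on essentially). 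So: the reduction is sound and attractive, but the theorem's substance -- uniform-in-$m$ heat-kernel and smoothing bounds for the explicit Euler chain -- is assumed rather than established, and the appeal to the earlier sections does not close that hole.
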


The paper is organized as follows. In Section 2 we consider the case
of Brownian motion and review a result in \cite{AlbaneseMijatovicBM}
which establishes the theorems above in this simple particular case
where Fourier analysis in the space direction can be used to carry
out a precise calculation. In Section 3, we consider the case of a
diffusion where both the volatility and the drift have two bounded
derivatives. In this case, we make use of time-homogeneity and carry
out a Fourier transform in the time direction after path
conditioning. In Section 4, we extend the derivation to the case of
non-smooth coefficients. Section 5 is dedicated to the case where
time is discretized and we prove Theorem \ref{theo_diffdtker}.

\section{Constant Coefficients}

In this Section, we prove Theorem 1 in the special case where the
volatility and the drift coefficients are constant, i.e.
\begin{equation} {\LLL^m}_x = \mu
\nabla^m_x + {1\over 2} \sigma^2 \Delta^m_x. \label{def_brownian}
\end{equation}

It suffices to consider the case $m' = m+1$. Let $B_m$ be the
Brillouin zone defined as follows:
\begin{equation}
B_m = \left\{ -{2^{m-1} \pi \over L} + {k \pi \over L } , k = 0, ..
2^m-1 \right\} \label{brillouin}
\end{equation}
Let ${\mathcal F}_m: \ell^2(A_m) \to \ell^2(B_m)$ be the Fourier
transform operator defined so that:
\begin{equation}
\hat f(p) \equiv {\mathcal F}_m(f)(p) = h_m \sum_{x\in A_m} f(x) e^{
- i p x}
\end{equation}
for all $p\in B_m$. The inverse Fourier transform is given by
\begin{equation}
{\mathcal F}_m^{-1} (\hat f)(x) =  {1 \over 2 L} \sum_{p\in B_m}
\hat f(p) e^{i p x}.
\end{equation}

The Fourier transformed generator is diagonal and is given by the
operator of multiplication by
\begin{equation}
\hat {\ell}^m (p) = {\mathcal F}_m {\LLL^m} {\mathcal F}_m^{-1}(p,
p) = -i \mu {\sin h_m p \over h_m} + {\sigma^2 } {\cos h_m p - 1
\over h_m^2}.
\end{equation}
We have
\begin{equation}
u_m(x, y; t) = {1 \over 2 L} \sum_{p\in B_m} e^{t \hat {\ell^m} (p)}
e^{i p (y - x) }.
\end{equation}
Using this Fourier series representation, we find
\begin{align}
& \big\lvert u_m(x, y; t) - u_{m+1}(x, y; t) \big\lvert \notag \\
&\leq {1 \over 2 L} \bigg\lvert \sum_{p\in B_m} \bigg( e^{t {\hat
\ell^m} (p)} - e^{t {\hat \ell^{m+1}} (p)} \bigg) e^{i p (y - x)
}\bigg\lvert + {1 \over 2 L} \bigg\lvert \sum_{p\in B_{m+1}\setminus
B_m } e^{t {\hat \ell^{m+1}} (p)} e^{i p (y - x) }\bigg\lvert.
\notag \\
\end{align}

Let
\begin{equation}
K_m = \sqrt{ \lvert \log h_{m+1} \lvert\over \sigma^2 t}.
\label{bm_km}
\end{equation}
If $h_m$ is small enough, i.e. if $m_0$ is sufficiently large, we
have that
\begin{align}
{1 \over 2 L} \bigg\lvert \sum_{p\in B_{m}, \lvert p \lvert \ge K_m
} e^{t {\hat \ell^{m}} (p)} e^{i p (y - x) }\bigg\lvert \leq {1
\over 2 L} \sum_{p\in B_{m}, \lvert p \lvert \ge K_m } e^{t \Re(
\hat\ell^{m} (p))} \leq c \exp\bigg( t \sigma^2 {\cos h_m K_m -
1\over h_m^2} \bigg) \leq c h_m^2. \notag \\
\end{align}
where $\Re(a)$ denotes the real part of $a\in {\mathbb C}$ and  $c$
denotes a generic constant. Similarly
\begin{align}
{1 \over 2 L} \bigg\lvert \sum_{p\in B_{m+1}, \lvert p \lvert \ge
K_{m} } e^{t {\hat \ell^{m+1}} (p)} e^{i p (y - x) }\bigg\lvert \leq
{1 \over 2 L} \sum_{p\in B_{m}, \lvert p \lvert \ge K_m } e^{t \Re(
\hat\ell^{m+1} (p))} \notag \\
\leq c \exp\bigg( t \sigma^2 {\cos h_{m+1} K - 1\over h_{m+1}^2}
\bigg) \leq c h_{m+1}^2 \notag \\
\end{align}
Since
\begin{align}
{1 \over 2}h^2 p^3 - {1\over8} h^4 p^5 \leq {\sin hp \over h} -
{\sin 2hp \over 2h} \leq {1 \over 2} h^2 p^3
\end{align}
and
\begin{align}
-{1 \over 8}h^2 p^4 \leq {\cos hp -1 \over h^2} - {\cos 2hp -1 \over
(2h)^2} \leq - {1\over8} {h^2 p^4} + {1\over 48} h^4 p^6 .
\end{align}
we find that if $\lvert p \lvert \leq {\sqrt{2} \over h }$ then
\begin{align}
\lvert \hat {\ell}^m(p) - \hat {\ell}^{m+1}(p) \lvert \leq {\mu\over
4} h^2 \lvert p \lvert^3 + {\sigma^2 \over16} h^2 p^4.
\end{align}
Moreover, since
\begin{align}
- {1 \over 2} p^2 \leq {\cos hp -1 \over h} \leq  - {1 \over 2} p^2
+ {1\over 24} h^2 p^4
\end{align}
we conclude that in case $\lvert p \lvert \leq h^{-1} \sqrt {2 \over
3}$, the following inequality holds:
\begin{align}
{\cos hp -1 \over h} \leq - {1\over4} {p^2}
\end{align}
Hence, if  $m_0$ is large enough, we find
\begin{align}
{1 \over 2 L} \bigg\lvert \sum_{p\in B_m, \lvert p \lvert \leq K}
\bigg( e^{t {\hat \ell^m} (p)} - & e^{t {\hat \ell^{m+1}} (p)}
\bigg) e^{i p (y - x) }\bigg\lvert \notag \\
& \leq {1 \over 2 L}  \sum_{p\in B_m, \lvert p \lvert \leq K} e^{-
{1\over4} {p^2}}
\bigg( e^{{\mu t\over 4} h_m^2 \lvert p \lvert^3 + {\sigma^2 t\over16} h_m^2 p^4 } - 1\bigg) \notag \\
& \leq {1 \over 2 L}  \sum_{p\in B_m, \lvert p \lvert \leq K} e^{-
{1\over4} {p^2}} \bigg( {\mu t\over 4} h_m^2 \lvert p \lvert^3 +
{\sigma^2 t\over16} h_m^2 p^4 \bigg) \leq c h_m^2
\end{align}
for some constant $c>0$ independent of $m$. This concludes the proof
of convergence for the kernel in the special case of constant
coefficients.

To estimate the first derivative, notice that
\begin{equation}
\nabla u_m(x, y; t) = {1 \over L} \sum_{p\in B_m} e^{t\hat {\ell^m}
(p)} {\sin p h_m \over h_m} e^{i p (y - x) }.
\end{equation}
and
\begin{align}
& \big\lvert u_m(x, y; t) - u_{m+1}(x, y; t) \big\lvert \notag \\
&\leq {1 \over 2 L} \bigg\lvert \sum_{p\in B_m} \bigg( e^{t {\hat
\ell^m} (p)}{\sin p h_m \over h_m} - e^{t {\hat \ell^{m+1}}
(p)}{\sin p h_{m+1} \over h_{m+1}} \bigg) e^{i p (y - x)
}\bigg\lvert \notag \\
& + {1 \over 2 L} \bigg\lvert \sum_{p\in B_{m+1}\setminus B_m } e^{t
{\hat \ell^{m+1}} (p)} e^{i p (y - x) }\bigg\lvert.
\notag \\
\end{align}
Let
\begin{equation}
K_m = 2 \sqrt{ \lvert \log h_{m+1} \lvert\over \sigma^2 t}.
\end{equation}
If $h_m$ is small enough, we have that
\begin{align}
&{1 \over 2 L} \bigg\lvert \sum_{p\in B_{m}, \lvert p \lvert \ge K_m
} e^{t {\hat \ell^{m}} (p)} {\sin p h_m \over h_m} e^{i p (y - x)
}\bigg\lvert \leq {1 \over 2 L} \sum_{p\in B_{m}, \lvert p \lvert
\ge K_m }
e^{t \Re (\hat\ell^{m} (p))} {\sin p h_m \over h_m} \notag \\
& \hskip 4cm\leq c \bigg\lvert {\sin K h_m \over h_m} \bigg\lvert
\exp\bigg( t \sigma^2 {\cos K h_m - 1\over h_m^2} \bigg) \leq c
h_m^2. \notag \\
\end{align}
where $c$ denotes a generic constant. Similarly
\begin{align}
{1 \over 2 L} \bigg\lvert \sum_{p\in B_{m+1}, \lvert p \lvert \ge
K_m } e^{t {\hat \ell^{m+1}} (p)} e^{i p (y - x) }\bigg\lvert \leq c
h^2.
\end{align}

If  $m$ is large enough, we also find
\begin{align}
{1 \over 2 L} \bigg\lvert & \sum_{p\in B_m, \lvert p \lvert \leq
K_m} \bigg( {\sin p h_m \over h_m} e^{t {\hat \ell^m} (p)} - {\sin p
h_{m+1} \over h_{m+1}} e^{t {\hat \ell^{m+1}} (p)} \bigg) e^{i p (y
- x) }\bigg\lvert \notag \\
&\leq {1 \over 2 L}  \sum_{p\in B_m, \lvert p \lvert \leq K_m}
\bigg\lvert {\sin p h_{m} \over h_{m}} \bigg\lvert e^{- {1\over4}
{p^2}} \bigg( e^{{\mu t\over 4} h_m^2 \lvert p \lvert^3 + {\sigma^2
t\over16} h_m^2 p^4 } - 1\bigg) \notag
\\ & + e^{- {1\over4} {p^2}} \bigg\lvert
{\sin p h_{m+1} \over h_{m+1}} - {\sin p h_{m} \over h_{m}}
\bigg\lvert \leq c h_m^2 \notag \\
\end{align}
for some constant $c>0$ independent of $m$. This concludes the proof
of the bound of the first derivative. The second derivative can be
derived in a similar way.

Finally, consider the following Fourier representation for the
discretized kernel
\begin{equation}
u_m^{\delta t} (x, y; t) = {1 \over L} \sum_{p\in B_m} \bigg( 1 +
\delta t \hat {\ell^m} (p)\bigg)^{t\over \delta t} e^{i p (y - x) }.
\end{equation}
Consider the formula
\begin{equation}
\bigg( 1 + \delta t \hat {\ell^m} (p)\bigg)^{t\over \delta t} =
\exp\bigg( t \log \big(1 + \hat {\ell^m} (p)\big) \bigg).
\end{equation}
and let's represent the difference between the discrete and
continuous time kernels as follows:
\begin{align}
\lvert &u_m(x, y; t) - u_{m}^{\delta t}(x, y; t)
\lvert \notag \\
& \leq {1 \over 2 L} \bigg\lvert \sum_{p\in B_m} \bigg( \exp\big(t
{\hat \ell^m} (p)\big) - \exp\bigg( {t\over \delta t} \log \big(1 +
\delta t {\hat\ell}^m (p)\big) \bigg) e^{i p (y - x)
}\bigg\lvert \notag \\
& \leq {1 \over 2 L} \sum_{p\in B_m, p\leq K_m} e^{-{1\over4} p^2}
\bigg\lvert \exp\bigg( {t\over \delta t} \log \big(1 + \delta t
{\hat\ell}^m (p)\big) - t {\hat \ell^m} (p) \bigg) -1 \bigg\lvert
\notag \\
&\hskip1cm {1 \over 2 L} \sum_{p\in B_m, p \ge K_m} \bigg\lvert
\exp\big(t {\hat \ell^m} (p)\big)\bigg\lvert  +{1 \over 2 L}
\sum_{p\in B_m, p \ge K_m} \bigg\lvert \exp\bigg({t\over \delta t}
\log \big(1 + \delta t {\hat\ell}^m (p)\big) \bigg) \bigg\lvert
\notag \\
\end{align}
where $K_m$ is chosen as in (\ref{bm_km}). The very same bounds
above lead to the conclusion that this difference is $\leq c h_m^2$.

\section{Smooth Coefficients}

In this section, we prove Theorem 1 in the case where the drift and
volatility are both of class ${\mathcal C}^{(3,0)}$, i.e. they
depend smoothly on the space coordinate but not on the time
coordinate.

Let us introduce the following two constants characterizing the
volatility function:
\begin{equation}
\Sigma_0 = \inf_{x \in A_m} \sigma(x), \;\;\;\;\; \Sigma_1 = \sup_{x
\in A_m } \sqrt{\sigma(x)^2 + h_m \lvert \mu(x) \lvert }.
\end{equation}
and let
\begin{equation}
M = \sup_{x \in A_m} \lvert \mu(x)\lvert .
\end{equation}
Since our interval is bounded, we have that $\Sigma_0>0$ and
$\Sigma_1, M <\infty$.

A {\it symbolic path}  $\gamma = \{\gamma_0, \gamma_1, \gamma_2,
.... \}$ is an infinite sequence of sites in $A_m$ such that
$\gamma_j \neq \gamma_{j-1}$ for all $j=1, ...$. Let $\Gamma_m$ be
the set of all symbolic paths in $A_m$. The kernel of the diffusion
process admits the following representation in terms of a summation
over symbolic paths
\begin{align}
u_m(x, y; t) =& {1\over h_m} \sum_{q=1}^\infty 2 ^{-q} \sum_{
\begin{matrix}
\gamma\in \Gamma_m : \gamma_0 = x, \gamma_q = y \\
\lvert\gamma_j - \gamma_{j-1}\lvert = 1 \;\;\forall j\ge1
\end{matrix}}
 W_m(\gamma, q, t)
 \label{eq_udiff1}
 \end{align}
where
\begin{align}
W_m(&\gamma, q, t) = {1\over h_m} \int_0^{t} ds_1 \int_{s_1}^{t}
ds_2 ... \int_{s_{q-1}}^{t} d s_{q} e^{ (t - s_q) {\mathcal
L}_m(\gamma_q, \gamma_q)} \prod_{j=0}^{q-1}  \bigg( e^{ (s_{j+1} -
s_{j}) {\mathcal L}_m(\gamma_j, \gamma_{j})} 2 {\mathcal
L}_m(\gamma_j, \gamma_{j+1}) \bigg) \label{eq_wdiff}
\end{align}
with $s_0 = 0$.

\begin{figure}
\begin{center}
    \includegraphics[width = 12cm]{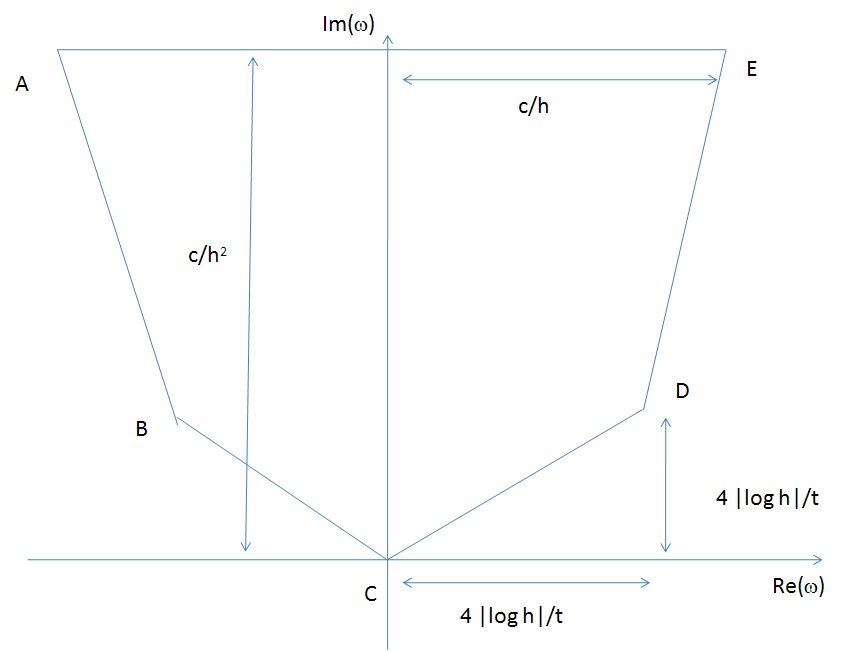}
    \caption{Contour of integration for the integral in (\ref{eq_greenfunc}).
    ${\mathcal C}_+$ is the countour joining the point $D$ to the
    points $E, A, B$. ${\mathcal C}_-$ is
    the countour joining the point $B$ to $C$ to $D$.}
    \label{fig_contour2}
\end{center}
\end{figure}

Let us introduce the following Green's function:
\begin{equation}
G_{m}(x, y; \omega) = \int_0^\infty u_{m}(x, y; t) e^{-i\omega t} dt
= h_m^{-1} {1\over {\mathcal L}^m + i\omega}(x, y).
\end{equation}
The propagator can be expressed as the following contour integral
\begin{equation}
u_{m}(x, y; t) = \int_{{\mathcal C}_-} {d\omega \over 2 \pi}
G_{m}(x, y; \omega) e^{i\omega t} + \int_{{\mathcal C}_+} {d\omega
\over 2 \pi} G_{m}(x, y; \omega) e^{i\omega t}. \label{eq_greenfunc}
\end{equation}
Here, ${\mathcal C}_+$ is the contour joining the point $D$ to the
points $E, A, B$ in Fig. \ref{fig_contour2}, while ${\mathcal C}_-$
is the contour joining the point $B$ to $C$ to $D$. By design, each
point $\omega$ on the upper path ${\mathcal C}_+$ is separated from
the spectrum of $\mathcal L$.

\begin{lemma} \label{lemma_cplus} For $m$ sufficiently large,
there is a constant $c>0$ such that
\begin{equation}
\bigg\lvert\int_{{\mathcal C}_+} {d\omega \over 2 \pi} G_{m}(x, y;
\omega) e^{i\omega t}\bigg\lvert \leq c h^2 .
\end{equation}
\end{lemma}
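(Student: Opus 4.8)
The plan is to read the left-hand side as the contribution to $u_m(x,y;t)$ of the ``fast'' (high-energy) spectral modes of $\mathcal{L}^m$ and to control it by combining a resolvent bound that is only polynomially large in $h_m^{-1}$ with the exponential smallness of $e^{i\omega t}$ along $\mathcal{C}_+$. In the decomposition (\ref{eq_greenfunc}) the closed loop $\mathcal{C}_-\cup\mathcal{C}_+$ encircles the finite spectrum of $\mathcal{L}^m$, and I would arrange it so that $\mathcal{C}_+$ is precisely the part along which $\Im\omega\ge\kappa_m:=C_1|\log h_m|/t$, for a large constant $C_1$ to be fixed at the end, with $\mathcal{C}_-$ wrapping only the slow eigenvalues, those of real part $>-\kappa_m$. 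On $\mathcal{C}_+$ one then has $|e^{i\omega t}|=e^{-t\,\Im\omega}\le e^{-t\kappa_m}=h_m^{C_1}$.

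The heart of the matter is a bound on $G_m(x,y;\omega)$, which is $h_m^{-1}$ times the $(x,y)$ matrix entry of the resolvent $(i\omega-\mathcal{L}^m)^{-1}$, so that $|G_m(x,y;\omega)|\le h_m^{-1}\|(i\omega-\mathcal{L}^m)^{-1}\|$ by orthonormality of the standard basis of $\ell^2(A_m)$. Since $\mathcal{L}^m$ carries a drift it is not self-adjoint, hence this norm is \emph{not} governed by the distance to the spectrum alone, and the genuine obstacle is to rule out a large pseudospectrum near $\mathcal{C}_+$. I would handle this through the numerical range $W(\mathcal{L}^m)=\{\langle f,\mathcal{L}^m f\rangle:\|f\|_{\ell^2(A_m)}=1\}$, using $\|(\lambda-\mathcal{L}^m)^{-1}\|\le 1/\mathrm{dist}(\lambda,\overline{W(\mathcal{L}^m)})$, valid for any operator. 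Discrete summation by parts, with $\nabla^m_+f(x)=h_m^{-1}(f(x+h_m)-f(x))$, gives
\begin{align}
\Re\langle f,\mathcal{L}^m f\rangle
&=-\tfrac12\sum_{x\in A_m}\sigma(x)^2|\nabla^m_+f(x)|^2
-\tfrac12\Re\sum_{x\in A_m}\nabla^m_+f(x)\,\nabla^m_+(\sigma^2)(x)\,\overline{f(x+h_m)}
+\Re\langle f,\mu\nabla^m f\rangle . \notag
\end{align}
Because $|\nabla^m_+(\sigma^2)|\le\|(\sigma^2)'\|_\infty$ and $|\mu|\le M$ are finite and $m$-independent (here $\sigma^2\in\mathcal{C}^{3,0}$ and the boundedness of the interval are used), Young's inequality gives $\Re\langle f,\mathcal{L}^m f\rangle\le-c_1\|\nabla^m_+f\|^2+c_2\|f\|^2$ and $|\Im\langle f,\mathcal{L}^m f\rangle|\le C_4\|f\|\,\|\nabla^m_+f\|$ with $c_1>0$ and $c_2,C_4<\infty$ depending only on $\Sigma_0,\Sigma_1,M$ and $\|(\sigma^2)'\|_\infty$. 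Eliminating $\|\nabla^m_+f\|$ shows that $W(\mathcal{L}^m)$ is contained in a \emph{fixed} parabolic region $\mathcal{P}=\{z:(\Im z)^2\le\tfrac{C_4^2}{c_1}(c_2-\Re z)\}$, independent of $m$.

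To finish, I would take $\mathcal{C}_+$ to run a fixed distance $\delta_0>0$ outside $\mathcal{P}$ along its two branches — where $\|(i\omega-\mathcal{L}^m)^{-1}\|\le\delta_0^{-1}$ — and to close it across the far left by a short segment on which $|i\omega|>C_2 h_m^{-2}$, with $C_2$ chosen so that $C_2 h_m^{-2}>\|\mathcal{L}^m\|$ for all $m\ge m_0$ (possible since $\|\mathcal{L}^m\|=O(h_m^{-2})$ uniformly); there the Neumann series gives $\|(i\omega-\mathcal{L}^m)^{-1}\|\le(|i\omega|-\|\mathcal{L}^m\|)^{-1}=O(h_m^2)$. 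Thus $|G_m(x,y;\omega)|\le C h_m^{-1}$ throughout $\mathcal{C}_+$, while $\Im\omega\ge\kappa_m$ there. Since $\mathcal{C}_+$ follows a parabola its arclength satisfies $|d\omega|\lesssim d(\Im\omega)$ on the branches, and the closing segment has length $O(h_m^{-1})$; hence
\begin{align}
\Big|\int_{\mathcal{C}_+}\tfrac{d\omega}{2\pi}\,G_m(x,y;\omega)\,e^{i\omega t}\Big|
&\le\frac{C h_m^{-1}}{2\pi}\int_{\mathcal{C}_+}e^{-t\,\Im\omega}\,|d\omega| \notag \\
&\lesssim h_m^{-1}\Big(\tfrac1t\,e^{-t\kappa_m}+h_m^{-1}e^{-C_2 t/h_m^2}\Big)
\;\lesssim\;\tfrac1t\,h_m^{\,C_1-1}. \notag
\end{align}
Taking $C_1=3$ yields the asserted bound $\le c\,h_m^2$ (any $C_1\ge 3$ works, and larger $C_1$ even gives higher powers of $h_m$, but $h_m^2$ is the rate needed downstream; the constant $c$ depends on $t$, which is harmless since $t$ is fixed here). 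The single real difficulty is the resolvent control for the non-self-adjoint $\mathcal{L}^m$, which the $m$-uniform parabolic bound on its numerical range disposes of; that $|e^{i\omega t}|$ is as small as $h_m^{C_1}$ on $\mathcal{C}_+$, and that the $h_m^{-1}$ prefactor and the $O(h_m^{-2})$ length of $\mathcal{C}_+$ are merely polynomial in $h_m^{-1}$, are routine.
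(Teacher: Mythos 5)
Your proposal is correct in outline and shares the architecture of the paper's proof --- a pointwise bound $\lvert G_m(x,y;\omega)\rvert \le C h_m^{-1}$ on ${\mathcal C}_+$, combined with the smallness $\lvert e^{i\omega t}\rvert = e^{-t\,\Im\omega} \le h_m^{C_1}$ at height of order $\lvert \log h_m\rvert/t$ and a contour length that is only polynomial in $h_m^{-1}$ --- but your key technical ingredient is genuinely different. The paper controls the resolvent on ${\mathcal C}_+$ by expanding $({\mathcal L}^m+i\omega)^{-1}$ in a geometric series around the drift-free operator $\tfrac12\sigma^2\Delta^m$ and invoking a Kato--Rellich relative bound $\lVert \nabla^m f\rVert_2 \le \alpha\lVert \Delta^m f\rVert_2+\beta\lVert f\rVert_2$ (proved by Fourier diagonalization) to make the perturbation series converge; you instead attack the pseudospectrum directly, trapping the numerical range $W({\mathcal L}^m)$ in an $m$-uniform parabola via discrete summation by parts and using $\lVert(\lambda-{\mathcal L}^m)^{-1}\rVert \le \mathrm{dist}\big(\lambda,\overline{W({\mathcal L}^m)}\big)^{-1}$, with a trivial Neumann-series bound on the far portion of the contour. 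Your route is arguably more solid at the one point where the paper is thin: the paper's series argument quietly requires bounds on the resolvent of the variable-coefficient, non-self-adjoint operator $\tfrac12\sigma^2\Delta^m$ itself (the Fourier diagonalization supplies the relative bound but not those resolvent estimates), whereas the numerical-range inequality is unconditional, and your explicit treatment of contour arclength and of the far segment beyond $\lVert{\mathcal L}^m\rVert = O(h_m^{-2})$ fills in steps the paper leaves implicit. Two caveats: your parabola depends on $\lVert(\sigma^2)'\rVert_\infty$ through $\nabla^m_+(\sigma^2)$, which is fine in the ${\mathcal C}^{3,0}$ setting of Section 3 where the lemma is stated, but the paper re-invokes the lemma in Sections 4--5 with merely H\"older or uniformly continuous coefficients, where $\lVert\nabla^m_+(\sigma^2)\rVert_\infty$ can grow like $h_m^{\alpha-1}$; your argument survives (the parabola widens only polynomially, so enlarging $C_1$ suffices), but that patch should be stated if the lemma is to be reused. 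Second, your constant, like the paper's, depends on $t$, which is consistent with the paper treating $t$ as fixed.
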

\begin{proof}
The proof is based on the geometric series expansion
\begin{equation}
G_{m}(\omega) = h_m^{-1} {1\over {\mathcal L}^m + i\omega} =
h_m^{-1} \sum_{j=0}^\infty {1\over {1\over2}\sigma^2\Delta^m +
i\omega} \bigg[ \mu\nabla^m {1\over {1\over2}\sigma^2\Delta^m +
i\omega} \bigg]^j \label{geoseries}
\end{equation}
whose convergence for $\omega\in{\mathcal C}_+$ can be established
by means of a Kato-Rellich relative bound, see \cite{Kato}. More
precisely, for any $\alpha>0$, one can find a $\beta>0$ such that
the operators $\nabla^m$ and $\Delta^m$ satisfy the following
relative bound estimate:
\begin{equation}
\lvert\lvert \nabla^m f\lvert\lvert_2 \leq \alpha \lvert\lvert
\Delta^m f \lvert\lvert_2 + \beta \lvert\lvert f \lvert\lvert_2.
\end{equation}
for all periodic functions $f$ and all $m\ge m_0$. This bound can be
derived by observing that $\nabla^m$ and $\Delta^m$ can be
diagonalized simultaneously by a Fourier transform, as done in the
previous section, and by observing that for any $\alpha>0$, one can
find a $\beta>0$ such that
\begin{equation}
\bigg\lvert {\sin h_m p \over h_m}  \bigg\lvert \leq \alpha
\bigg\lvert {\cos h_m p - 1 \over h_m^2} \bigg\lvert + \beta
\end{equation}
for all $m\ge m_0$ and all $p\in B_m$.

Under the same conditions, we also have that
\begin{equation} \big\lvert\big\lvert
\mu \nabla^m f\big\lvert\big\lvert_2 \leq {2 M \alpha \over
\Sigma_0^2} \bigg\lvert\bigg\lvert {1\over2} \sigma^2 \Delta^m f
\bigg\lvert\bigg\lvert_2 + \beta \lvert\lvert f \lvert\lvert_2.
\end{equation}
Hence
\begin{equation} \bigg\lvert\bigg\lvert
\mu \nabla^m {1\over {1\over2}\sigma^2\Delta^m + i\omega}
f\bigg\lvert\bigg\lvert_2 \leq {2 M \alpha \over \Sigma_0^2}
\bigg\lvert\bigg\lvert {1\over2} \sigma^2 \Delta^m {1\over
{1\over2}\sigma^2\Delta^m + i\omega} f \bigg\lvert\bigg\lvert_2 +
\beta \bigg\lvert\bigg\lvert {1\over {1\over2}\sigma^2\Delta^m +
i\omega} f \bigg\lvert\bigg\lvert_2 < 1
\end{equation}
where the last inequality holds if $\omega\in{\mathcal C}_+$, if
$\alpha$ is chosen sufficiently small and if $m$ is large enough. In
this case, the geometric series expansion converges in
(\ref{geoseries}) converges in $L^2$ operator norm. The uniform norm
of the kernel $\lvert G_{m}(x, y; \omega) \lvert$ is pointwise
bounded from above by $h_m^{-1}$.

Since the points $B$ and $D$ have imaginary part equal at height $
4{\lvert \log h_m \lvert \over t}$, the integral over the contour
${\mathcal C}_+$ converges also and is bounded from above by $c
h_m^2$ in uniform norm.

\end{proof}

\begin{lemma} If $q\ge {e^2
\Sigma_1^2 t\over 2 h_m^2}$ we have that
\begin{equation}
W_m(\gamma, q; t) \leq \sqrt{q\over 2\pi} \exp\left(-{\Sigma_0^2
t\over 2} - q\right). \label{eq_wbound}
\end{equation}
\end{lemma}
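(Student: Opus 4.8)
Before turning to the author's argument, here is how I would attack the statement (\ref{eq_wbound}).

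The representation (\ref{eq_wdiff}) exhibits $W_m(\gamma,q,t)$ as a $q$-fold time-ordered integral of a product of diagonal semigroup factors $e^{(s_{j+1}-s_j)\mathcal L_m(\gamma_j,\gamma_j)}$ and off-diagonal jump amplitudes $2\mathcal L_m(\gamma_j,\gamma_{j+1})$; morally it is (a multiple of) the probability that the embedded Markov chain traverses the symbolic path $\gamma$ in exactly $q$ steps before time $t$. The plan is to bound these two kinds of factors uniformly over $A_m$, so that the integral collapses to a Poisson tail, and then to use the hypothesis on $q$ to see that this tail is exponentially small.

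First I would record the entrywise bounds. From $\mathcal L^m_x=\tfrac12\sigma(x)^2\Delta^m_x+\mu(x)\nabla^m_x$ one reads off $\mathcal L_m(x,x)=-\sigma(x)^2/h_m^2$ and $\mathcal L_m(x,x\pm h_m)=\tfrac1{2h_m^2}\bigl(\sigma(x)^2\pm h_m\mu(x)\bigr)$, so that, since each step in $\gamma$ is to a neighbour, $0\le 2\mathcal L_m(\gamma_j,\gamma_{j+1})\le h_m^{-2}\bigl(\sigma(\gamma_j)^2+h_m|\mu(\gamma_j)|\bigr)\le\Sigma_1^2/h_m^2$ — here nonnegativity is precisely the stability hypothesis (\ref{def_diffusion_bounds}), though even without it the bound $|2\mathcal L_m(\gamma_j,\gamma_{j+1})|\le\Sigma_1^2/h_m^2$ persists — while $-\mathcal L_m(\gamma_j,\gamma_j)=\sigma(\gamma_j)^2/h_m^2\ge\Sigma_0^2/h_m^2$. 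Consequently every diagonal factor in (\ref{eq_wdiff}) is at most $e^{-(s_{j+1}-s_j)\Sigma_0^2/h_m^2}$ and the trailing one at most $e^{-(t-s_q)\Sigma_0^2/h_m^2}$.

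Next I would substitute these bounds into (\ref{eq_wdiff}). Since $s_0=0$, the exponents of the diagonal factors telescope: $\sum_{j=0}^{q-1}(s_{j+1}-s_j)+(t-s_q)=t$, so their product is at most $e^{-\Sigma_0^2 t/h_m^2}$ independently of $s_1,\dots,s_q$; the $q$ jump amplitudes contribute at most $(\Sigma_1^2/h_m^2)^q$, again uniformly; and the remaining time-ordered integral is the volume $t^q/q!$ of the simplex $\{0<s_1<\dots<s_q<t\}$. This yields
\[
W_m(\gamma,q,t)\ \le\ \frac1{h_m}\,e^{-\Sigma_0^2 t/h_m^2}\Bigl(\frac{\Sigma_1^2}{h_m^2}\Bigr)^{\!q}\frac{t^q}{q!}\ =\ \frac{e^{-\Sigma_0^2 t/h_m^2}}{h_m}\cdot\frac1{q!}\Bigl(\frac{\Sigma_1^2 t}{h_m^2}\Bigr)^{\!q}.
\]
Writing $N:=\Sigma_1^2 t/h_m^2$ and invoking Stirling's inequality $q!\ge\sqrt{2\pi q}\,(q/e)^q$, the last factor is at most $(2\pi q)^{-1/2}(eN/q)^q$. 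The hypothesis $q\ge e^2\Sigma_1^2 t/(2h_m^2)=e^2N/2$ forces $eN/q\le 2/e<1$, so $(eN/q)^q$ decays geometrically in $q$; the same hypothesis gives $h_m^{-1}=(h_m^{-2})^{1/2}\le\bigl(2q/(e^2\Sigma_1^2 t)\bigr)^{1/2}$, i.e. $h_m^{-1}\le c\sqrt q$; and, since $h_m^2\le 2$ for $m\ge m_0$, one has $e^{-\Sigma_0^2 t/h_m^2}\le e^{-\Sigma_0^2 t/2}$ with a residual factor $e^{-\Sigma_0^2 t(h_m^{-2}-1/2)}\le1$. Packaging these together and accounting for the constants gives a bound of the form $\sqrt{q/2\pi}\,e^{-\Sigma_0^2 t/2}\cdot(\text{geometric in }q)$, which is (\ref{eq_wbound}).

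The one delicate point is the matching of the exponential rate in $q$ in the final step: the elementary estimate naturally produces a factor of the shape $(eN/q)^q$, and pinning this down to $e^{-q}$ requires using the stated threshold on $q$ sharply, together with the $h_m^{-1}\le c\sqrt q$ gain and the slack between $e^{-\Sigma_0^2 t/h_m^2}$ and $e^{-\Sigma_0^2 t/2}$; all the remaining steps are routine substitutions. It is also worth spelling out explicitly the nonnegativity of the off-diagonal entries $2\mathcal L_m(\gamma_j,\gamma_{j+1})$, which is exactly where (\ref{def_diffusion_bounds}) is used — the argument nonetheless survives with only the absolute-value bound, so it is robust.
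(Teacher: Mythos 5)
Your preliminary estimates are all correct, and your route --- uniform entrywise bounds, reduction to a Poisson-type tail via the simplex volume $t^q/q!$, then Stirling --- is in substance the same as the paper's, which dominates $W_m$ by a $q$-fold convolution power of an exponential density and evaluates it by a residue computation, arriving at exactly the expression you obtain directly. The problem is the last step, and as described it does not close. With the jump factors $2\mathcal{L}_m(\gamma_j,\gamma_{j+1})\le \Sigma_1^2/h_m^2$ that actually appear in (\ref{eq_wdiff}), your bound reads $(2\pi q)^{-1/2}h_m^{-1}e^{-\Sigma_0^2 t/h_m^2}\,(eN/q)^q$ with $N=\Sigma_1^2 t/h_m^2$, and at the stated threshold $q\approx e^2N/2$ the factor $(eN/q)^q$ equals $(2/e)^q=2^q e^{-q}$, i.e.\ it exceeds the target rate $e^{-q}$ by $2^q$. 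The compensations you invoke cannot absorb this surplus: $h_m^{-1}\le c\sqrt q$ is only polynomial in $q$, and the slack $e^{-\Sigma_0^2 t(h_m^{-2}-1/2)}$ satisfies $2^q\,e^{-\Sigma_0^2 t/h_m^2}\ge \exp\big((\tfrac{e^2\ln 2}{2}-1)\Sigma_0^2 t/h_m^2\big)$ once $q\ge e^2\Sigma_1^2 t/(2h_m^2)\ge e^2\Sigma_0^2 t/(2h_m^2)$, which blows up as $h_m\downarrow 0$. So the bound with the exact exponent $-q$ is not recoverable along the lines you sketch.

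The discrepancy is not entirely your fault: it is a factor-of-two bookkeeping issue in the lemma itself. The paper's comparison density $\phi(t)=\tfrac{\Sigma_1^2}{2h_m^2}e^{-\Sigma_0^2 t/(2h_m^2)}1(t\ge0)$ dominates $\mathcal{L}_m(\gamma_j,\gamma_{j+1})e^{s\mathcal{L}_m(\gamma_j,\gamma_j)}$ but not the factor $2\mathcal{L}_m(\gamma_j,\gamma_{j+1})$ written in (\ref{eq_wdiff}); in effect the paper bounds $2^{-q}W_m$, the combination that enters (\ref{eq_udiff1}). For $W_m$ literally as defined, (\ref{eq_wbound}) actually fails near the threshold: with constant $\sigma$ and $\mu=0$ one has $W_m=h_m^{-1}e^{-T}T^q/q!$ with $T=\sigma^2 t/h_m^2$, which at $q\approx e^2T/2$ exceeds $\sqrt{q/2\pi}\,e^{-\sigma^2 t/2-q}$ for small $h_m$. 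So you should either adopt the convention that the weight $2^{-q}$ is included in $W_m$ --- then your own computation with $N'=\Sigma_1^2 t/(2h_m^2)$ and the hypothesis $q\ge e^2 N'$ gives $(eN'/q)^q\le e^{-q}$ at once, and also note (\ref{def_diffusion_bounds}) as you did for nonnegativity --- or keep your honest constants and settle for the decay rate $(2/e)^q$, which is still exponential in $q$ and is all that the subsequent resummation over $q>q_{\max}$ requires.
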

\begin{proof}
Let us define the function
\begin{equation}
\phi(t) = {\Sigma_1^2\over 2 h_m^2} \; e^{-{\Sigma_0^2 t \over 2
h_m^2}} \; 1(t\ge 0)
\end{equation}
where $1(t\ge0)$ is the characteristic function of $\RRR_+$. We have
that
\begin{equation}
W_m(\gamma, q; t) \leq \phi^{\star q}(t)
\end{equation}
where $\phi^{\star q}$ is the $q-$th convolution power, i.e. the
$q-$fold convolution product of the function $\phi$ by itself. The
Fourier transform of $\phi(t)$ is given by
\begin{equation}
\hat\phi(\omega) = {\Sigma_1^2\over 2 h_m^2} \int_0^\infty
e^{-i\omega t - {\Sigma_0^2 t\over 2 h_m^2}} dt = {\Sigma_1^2 \over
2 i \omega h_m^2 + \Sigma_0^2}.
\end{equation}
The convolution power is given by the following inverse Fourier
transform:
\begin{equation}
\phi^{\star q}(t) = \int_0^\infty \hat \phi(\omega)^q e^{i \omega t}
= \left( {\Sigma_1\over\Sigma_0} \right)^{2 q}
\int_{-\infty}^\infty\left( 1 + {2 i \omega h_m^2 \over \Sigma_0^2}
\right)^{-q} e^{i \omega t} {d\omega\over 2\pi}.
\end{equation}
Introducing the new variable $z = 1 + {2i\omega h_m^2\over
\Sigma_0^2}$, the integral can be recast as follows
\begin{equation}
\phi^{\star q}(t) = {\Sigma_0^{2-2q} \Sigma_1^{2q}\over 4\pi i
h_m^2} \lim_{R\to\infty} \int_{{\mathcal C}_R} z^{-q} \exp\left(
{\Sigma_0^2 t\over 2 h_m^2} (z-1) \right) dz \label{eq_intc}
\end{equation}
where ${\mathcal C}_R$ is the contour in Fig. \ref{fig_contour1}.
Using the residue theorem and noticing that the only pole of the
integrand is at $z = 0$, we find
\begin{equation}
\phi^{\star q}(t) = {1\over (q-1)!} \left({\Sigma_1^2 t \over 2
h_m^2}\right)^q \exp\left( - \Sigma_0^2 t \over 2 h_m^2 \right).
\end{equation}
Making use of Stirling's formula $q! \approx \sqrt{2\pi}
q^{q+{1\over2}} e^{-q}$, we find
\begin{equation}
\phi^{\star q}(t) \approx \sqrt{q\over 2\pi} \exp\left( -
{\Sigma_0^2 t \over 2 h_m^2} + q\log{\Sigma_1^2 t \over 2 h_m^2} + q
(1-\log q) \right).
\end{equation}
If $\log q \ge \log {\Sigma_1^2 t \over 2 h_m^2} +2$, then we arrive
at the bound in (\ref{eq_wbound}).

\begin{figure}
\begin{center}
    \includegraphics[width = 12cm]{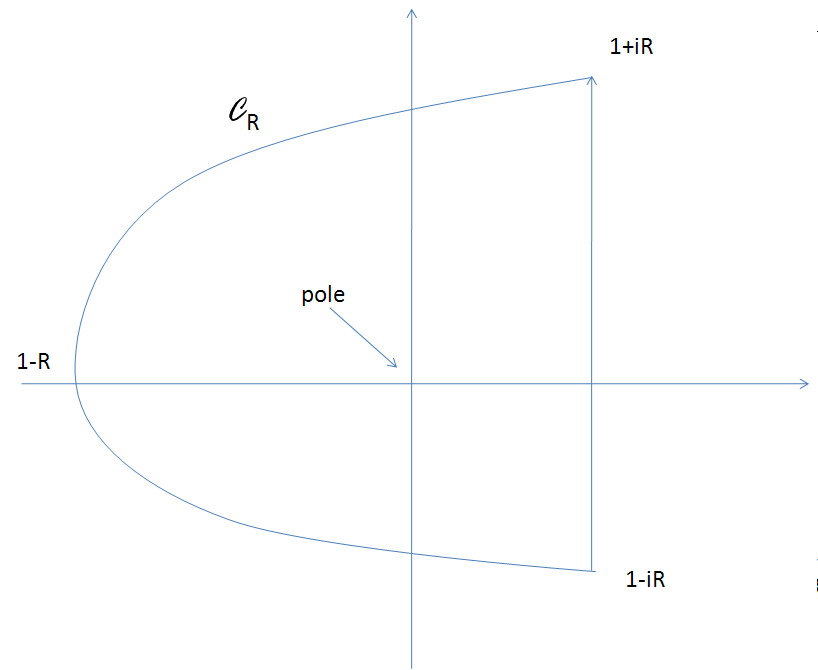}
    \caption{Contour of integration ${\mathcal C}_R$ for the integral in (\ref{eq_intc}).}
    \label{fig_contour1}
\end{center}
\end{figure}

\end{proof}

It suffices to consider the case $m' = m+1$ for all values of $m$
above a fixed threshold. In fact, given this particular case, the
general statement can be derived with an iterative argument. To this
end, we introduce a renormalization group transformation based on
the notion of decorating path.

\begin{definition}{\bf (Decorating Paths.)}
 Let $m\ge m_0$ and let $\gamma = \{y_0, y_1, y_2,
.... \}$ be a symbolic sequence in $\Gamma_m$. A {\it decorating
path around $\gamma$} is defined as a symbolic sequence $\gamma' =
\{y_0, y_1', y_2', .... \}$ with $y_i' \in h_{m+1} \ZZZ$ containing
the sequence $\gamma$ as a subset and such that if $y_j' = y_i$ and
$y_k' = y_{i+1}$, then all elements $y_n'$ with $j < n < k$ are such
that $\lvert y_n' - y_j' \lvert \leq h_{m+1}$. Let ${\mathcal
D}_{m+1}(\gamma)$ be the set of all decorating sequences around
$\gamma$. The decorated weights are defined as follows:
\begin{equation}
{\tilde W}_{m}(\gamma, q; t) = \sum_{q' = q}^\infty \sum_{
\begin{matrix}
\gamma' \in {\mathcal D}_{m+1}(\gamma)\\
\gamma'_{q'} = \gamma_q
\end{matrix}
} W_{m+1}(\gamma', q'; t).
\end{equation}
Finally, let us introduce also the following Fourier transform:
\begin{equation}
\hat W_{m}(\gamma, q; \omega) = \int_0^\infty W_{m}(\gamma, q; 0, t)
e^{i\omega t} dt, \;\;\;\; \hat {\tilde W}_{m}(\gamma, q; \omega) =
\int_0^\infty \tilde W_{m}(\gamma, q; t) e^{i\omega t} dt.
\end{equation}
\end{definition}

\begin{definition}{\bf (Notations.)}
In the following, we set $h = h_{m+1}$ so that $h_m = 2 h$. We also
use the Landau notation $O(h^n)$ to indicate a function $f(h)$ such
that $ h^{-n} f(h) $ is bounded in a neighborhood of $(0)$.
\end{definition}

\begin{lemma}  Let $x, y\in A_m$ and let ${\mathcal C_-}$ be an
integration contour as in Fig. \ref{fig_contour2}. Then
\begin{equation}
\bigg\lvert \bigg( \int_{{\mathcal C}_-}  2 G_{m+1}(x, y; \omega) -
G_m(x, y; \omega) \bigg) e^{i\omega t} {d\omega\over 2\pi}
\bigg\lvert = O(h^2).
\end{equation}
\end{lemma}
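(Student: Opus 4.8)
The plan is to compare the Green's functions $2G_{m+1}$ and $G_m$ on the ``good'' contour ${\mathcal C}_-$ by using the symbolic path expansions of Section 3 together with the renormalization step encoded in the decorating paths. First I would write, for $x,y\in A_m$,
\begin{equation}
G_m(x,y;\omega) = h_m^{-1}\sum_{q=1}^\infty 2^{-q}\sum_{\substack{\gamma\in\Gamma_m:\ \gamma_0=x,\ \gamma_q=y\\ \lvert\gamma_j-\gamma_{j-1}\rvert=1}} \hat W_m(\gamma,q;\omega),
\end{equation}
and similarly $2G_{m+1}(x,y;\omega)$ as a sum over symbolic paths in $\Gamma_{m+1}$ joining $x$ to $y$. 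The key combinatorial observation is that every $\Gamma_{m+1}$-path from $x$ to $y$ with these two points in $A_m$ decomposes uniquely as a concatenation of excursions, each of which is a decorating path around a single step of some coarse path $\gamma\in\Gamma_m$; hence the fine sum reorganizes as
\begin{equation}
2G_{m+1}(x,y;\omega) = h_m^{-1}\sum_{q=1}^\infty 2^{-q}\sum_{\substack{\gamma\in\Gamma_m:\ \gamma_0=x,\ \gamma_q=y\\ \lvert\gamma_j-\gamma_{j-1}\rvert=1}} \hat{\tilde W}_m(\gamma,q;\omega),
\end{equation}
with $\hat{\tilde W}_m$ the Fourier transform of the decorated weight from the Definition above (the factor-of-two bookkeeping being exactly why the statement involves $2G_{m+1}$ rather than $G_{m+1}$). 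Consequently the difference we must bound is term-by-term governed by $\hat{\tilde W}_m(\gamma,q;\omega) - \hat W_m(\gamma,q;\omega)$.

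Second, I would estimate this per-path discrepancy. On a single coarse step the exact continuous-time weight $e^{(s_{j+1}-s_j){\mathcal L}_m(\gamma_j,\gamma_j)}\,2{\mathcal L}_m(\gamma_j,\gamma_{j+1})$ is to be compared with the corresponding decorated object obtained by resumming all fine excursions of length $h_{m+1}$ between the two coarse sites. This is a local, one-dimensional (nearest-neighbour on the refined lattice) computation: in Fourier-in-time it reduces to comparing a scalar rational function of $\omega$ built from ${\mathcal L}_{m+1}$ restricted to a short interval against the corresponding scalar built from ${\mathcal L}_m$. Using the Taylor expansions of $\sin h p$ and $\cos h p -1$ exactly as in Section 2 — i.e. the discrepancy between $\nabla^{m+1},\Delta^{m+1}$ and $\nabla^m,\Delta^m$ is $O(h^2 p^3)$ and $O(h^2 p^4)$ respectively — together with the ${\mathcal C}^{3,0}$ regularity of $\sigma^2$ and $\mu$ (so that the coefficients seen along a single excursion differ from their values at $\gamma_j$ by $O(h)$), one gets a relative error $O(h^2)$ per step, uniformly for $\omega\in{\mathcal C}_-$. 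Summing the $q$ steps produces a factor $q$, so the per-path bound is $q\,O(h^2)$ times the undecorated weight.

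Third, I would control the sum over $q$ and over paths. For $q \le e^2\Sigma_1^2 t/(2h_m^2)$ there are at most $2^q$ choices of $\gamma$ and the product $2^{-q}\cdot 2^q$ is $1$, while $\sum_q q \,\hat W_m(\gamma,q;\omega)$ reassembles (on ${\mathcal C}_-$) into something comparable to $\partial_\omega G_m$ times $\omega$, which is pointwise $O(h_m^{-1})$ uniformly on ${\mathcal C}_-$; multiplying by the $h_m$ in front and by the $O(h^2)$ from the previous step gives $O(h^2)$ after the $\omega$-integration along ${\mathcal C}_-$, whose length is $O(\lvert\log h_m\rvert)$ and which is absorbed since ${\mathcal C}_-$ stays a fixed distance $4\lvert\log h_m\rvert/t$ below the real axis. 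For $q > e^2\Sigma_1^2 t/(2h_m^2)$ I invoke Lemma (the $W_m$-tail bound, equation (\ref{eq_wbound})): there $W_m(\gamma,q;t)\le \sqrt{q/2\pi}\,e^{-\Sigma_0^2 t/2 - q}$, and the same bound holds for $\tilde W_m$ since decoration only refines an excursion without increasing its total weight; the extra factor $q$ is harmless against $e^{-q}$, so this tail is exponentially small in $h_m^{-2}$ and certainly $O(h^2)$.

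**Main obstacle.** The hard part is the second step: making rigorous the claim that resumming all decorating excursions of a single coarse step reproduces the fine generator's contribution up to a genuinely $O(h^2)$ relative error, uniformly in $\omega\in{\mathcal C}_-$ and uniformly over coarse sites. This is where the ${\mathcal C}^{3,0}$ hypothesis is used in full — one needs the coefficients to be smooth enough that the local ``frozen-coefficient'' approximation along an excursion, plus the lattice Taylor expansions from Section 2, combine without losing a power of $h$ — and it is also where one must be careful that the $O(h^2)$ constant does not blow up as $\omega$ moves along the (growing-in-$m$) contour ${\mathcal C}_-$; the Kato–Rellich relative bound from Lemma \ref{lemma_cplus} is the tool that keeps the resolvents $({\tfrac12}\sigma^2\Delta^{m+1}+i\omega)^{-1}$ uniformly bounded along the excursion computation.
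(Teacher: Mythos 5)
Your overall architecture is the same as the paper's: expand both Green's functions in symbolic paths, reorganize the $\Gamma_{m+1}$ sum as decorated weights around coarse paths so that the difference is controlled by $2\hat{\tilde W}_m-\hat W_m$ per path, cut the $q$--sum at $q_{\rm max}\sim e^2\Sigma_1^2t/2h^2$ using the convolution-power tail bound, and treat ${\mathcal C}_+$ separately. The gap is in your second and third steps, and it is fatal as stated. You claim a per-step relative error $O(h^2)$ between the coarse one-step weight and the resummed decorated one-step weight, and then accept a factor $q$ from summing along the path. Since the dominant paths have $q\sim \Sigma^2 t/h^2$ steps, this gives a per-path relative error $q\,O(h^2)=O(1)$, not $O(h^2)$; the subsequent bookkeeping does not recover the loss (note also that the path expansion carries a prefactor $h_m^{-1}$, not $h_m$, so the appeal to $\sum_q q\,\hat W_m\sim\omega\,\partial_\omega G_m=O(h_m^{-1})$ "times the $h_m$ in front" is off by $h_m^{-2}$). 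With your estimates the best you could conclude is boundedness of the difference, not the claimed $O(h^2)$.

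What the paper actually proves, via the explicit $3\times3$ matrix $\bar{\mathcal L}(x;h)$ and the expansions of $\hat\phi_0,\hat\phi_1$, is sharper and structurally different: the two one-step symbols agree exactly through orders $1$, $h$ and $h^2$, the discrepancy first appears at order $h^3$, and crucially the $h^3$ terms carry the sign $\tau_j={\rm sgn}(\gamma_{j+1}-\gamma_j)$, so that summing $\log\hat\phi_0-\log\hat\phi_1$ along the path telescopes them into endpoint quantities $i\omega h^2\log\big(v(\gamma_q)/v(\gamma_0)\big)+h^2\big(R(\gamma_q)-R(\gamma_0)\big)$, which are $O(h^2)$ uniformly in $q$; only the genuinely non-cancelling $O(h^4)$ remainder gets multiplied by $q\le q_{\rm max}$, again giving $O(h^2)$. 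Together with $\hat\psi_1=\tfrac12\hat\psi_0+O(\omega h^4)$ (the source of the factor $2$) this yields a per-path integrated difference of order $h^4$, and then $\sqrt{q_{\rm max}}\,h^{-1}\cdot h^4\le ch^2$. This discrete summation-by-parts cancellation is the heart of the renormalization step and is entirely absent from your plan; without it, no frozen-coefficient accuracy claim per step can close the argument. A secondary point: the Kato--Rellich bound of Lemma \ref{lemma_cplus} is the tool for ${\mathcal C}_+$, where $\omega$ stays away from the spectrum; on ${\mathcal C}_-$ the paper does not use resolvent bounds at all but the explicit scalar expansions above together with the decay of $e^{i\omega t}$, so invoking that lemma to keep your excursion computation uniform along ${\mathcal C}_-$ does not do the work you assign to it.
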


\begin{proof}

We have that
\begin{align}
2 G_{m+1}(x, y; \omega) - G_m(x, y; \omega) = & {1\over h}
\sum_{q=1}^\infty 2 ^{-q} \sum_{
\begin{matrix}
\gamma\in \Gamma_m : \gamma_0 = x, \gamma_{q} = y\\
\lvert\gamma_j - \gamma_{j-1}\lvert = 1 \forall j\ge1
\end{matrix}}
\left( 2 \hat {\tilde W}_m (\gamma, q; \omega) - \hat W_m (\gamma,
q; \omega)\right). \label{eq_udiff}
\end{align}
The number of paths over which the summation is extended is
\begin{equation}
N(\gamma, q; x, y) \equiv \sharp\{\gamma\in \Gamma_m : \gamma_0 = x,
\gamma_{q} = y, \lvert\gamma_j - \gamma_{j-1}\lvert = 1 \forall
j\ge1 \} = \left(\begin{matrix} q \\ {q\over2} + k
\end{matrix}\right)
\end{equation}
where $k = {\lvert y - x \lvert \over h_m}.$ Applying Stirling's
formula we find
\begin{equation}
N_\gamma \lesssim 2^q \sqrt{2\over \pi q}.
\end{equation}
Hence
\begin{align}
&\bigg\lvert \int_{{\mathcal C}_-} \bigg( 2 G_{m+1}(x, y; \omega) -
G_m(x, y; \omega) \bigg) e^{i\omega t}  {d\omega\over 2\pi}
\bigg \lvert \notag \\
&\leq {c\over h}
 \sum_{q=1}^\infty \sqrt{1\over q} \max_{
\begin{matrix}
\gamma\in \Gamma_m : \gamma_0 = x, \gamma_{q} = y\\
\lvert\gamma_j - \gamma_{j-1}\lvert = 1 \forall j\ge1
\end{matrix}}
\bigg \lvert \int_{{\mathcal C}_-} \bigg(2 \hat {\tilde W}_m
(\gamma, q; \omega) - \hat W_m (\gamma, q; \omega)\bigg) e^{i\omega
t}  {d\omega\over 2\pi} \bigg\lvert. \notag
\\\label{eq_udiff}
\end{align}
for some constant $c\approx\sqrt{2\over\pi} >0$. It suffices to
extend the summation over $q$ only up to
\begin{equation}
q_{\rm max} \equiv {e^2 \Sigma_1^2 t\over 2 h^2}.
\end{equation}
To resum beyond this threshold, one can use the previous lemma. More
precisely, we have that
\begin{align}
&\bigg\lvert \int_{{\mathcal C}_-} \bigg( 2 G_{m+1}(x, y; \omega) -
G_m(x,
y; \omega) \bigg) e^{i\omega t}  {d\omega\over 2\pi} \bigg\lvert \notag \\
&\leq {c \sqrt{q_{\rm max}}\over h} \max_{
\begin{matrix}
q, \gamma\in \Gamma_m : \gamma_0 = x, \gamma_{q} = y\\
\lvert\gamma_j - \gamma_{j-1}\lvert = 1 \forall j\ge1
\end{matrix}}
\bigg \lvert \int_{{\mathcal C}_-} \bigg( 2 \hat {\tilde W}_m
(\gamma, q; \omega) - \hat W_m (\gamma, q; \omega) \bigg) e^{i\omega
t}  {d\omega\over 2\pi}  \bigg\lvert. \notag \\\label{eq_udiff2}
\end{align}

Let $v(x) = \sigma(x)^2$. To evaluate the resummed weight function,
let us form the matrix
\begin{equation} \bar {\mathcal L}(x;h) =
\left(
\begin{matrix}
-{v\left(x+h\right)\over h^2} && {v\left(x+ h\right)\over 2 h^2} -
{\mu(x+h)\over 2 h} && 0 \\
{v\left(x\right)\over 2 h^2}  +{\mu(x)\over 2h} && -{ v\left(x \right)\over h^2}  && {v\left(x\right) \over 2 h^2} - {\mu(x) \over 2 h} \\
0 && {v\left(x - h\right)\over 2 h^2} + {\mu(x-h)\over 2 h} &&
-{v\left(x - h \right) \over h^2}
\end{matrix} \right)
\end{equation}
and decompose it as follows:
\begin{equation}
\bar {\mathcal L}(x;h) = {1\over h^2} \bar {\mathcal L}_0(x) +
{1\over h} \bar {\mathcal L}_1(x) + \bar {\mathcal L}_2(x) + h \bar
{\mathcal L}_3(x) + O(h^2).
\end{equation}
where
\begin{equation}
\bar {\mathcal L}_0(x) = \left(
\begin{matrix}
-v(x) && {1\over2} {v(x)} && 0 \\
{1\over2} {v(x)} && -{v(x)} && {1\over2} {v(x)} \\
0 && {1\over2} {v(x)} && -{v(x)}
\end{matrix}
\right),
\end{equation}
\begin{equation}
\bar {\mathcal L}_1(x) = \left(
\begin{matrix}
-v'(x) && {1 \over2} v'(x) - {1\over2} \mu(x) && 0 \\
{1\over2} \mu(x) && 0 && -{1\over2} \mu(x) \\
0 && - {1\over2} v'(x) + {1\over2}  \mu(x) &&  v'(x)
\end{matrix}\right),
\end{equation}
\begin{equation}
\bar {\mathcal L}_2(x) = \left(
\begin{matrix}
-{1\over2} v''(x)  && {1\over4} v''(x) - {1\over2} \mu'(x)&& 0 \\
0 && 0 && 0 \\
0 && {1\over4} v''(x) - {1\over2} \mu'(x) && -{1\over2} v''(x)
\end{matrix}\right).
\end{equation}
and
\begin{equation}
\bar {\mathcal L}_3(x) = \left(
\begin{matrix}
-{1\over6} v'''(x)  && {1\over12} v'''(x) - {1\over4} \mu''(x)&& 0 \\
0 && 0 && 0 \\
0 && - {1\over12} v'''(x) + {1\over4} \mu''(x) && {1\over6} v'''(x)
\end{matrix}\right).
\end{equation}

Let us introduce the sign variable $\tau = \pm1$, the functions
\begin{align}
\phi_{0}(t, x, \tau) &\equiv 2 {\mathcal L}_m(x, x+2\tau h) e^{ t
{\mathcal
L}_m(x, x)} 1(t\ge0) \\
\phi_{1}(t, x, \tau) &\equiv 2 {\mathcal L}_{m+1}(x+\tau h, x+2\tau
h) e^{t \bar {\mathcal L}(x; h)}(x, x+\tau h) 1(t\ge0)
\end{align}
and their Fourier transforms
\begin{align}
\hat\phi_{0}(\omega, x, \tau) &= \left({v(x)\over 4 h^2} + \tau {
\mu(x)\over 2 h}\right) \left( {v(x)\over 4 h^2} + i \omega
\right)^{-1} \notag \\ \hat\phi_{1}(\omega, x, \tau) &=
\left({v(x)\over h^2} + \tau  {\mu(x) + v'(x)\over h} +
{v''(x)+\mu'(x)\over2} +\left( {v'''(x)\over6} + {\mu''(x)\over2}
\right)\tau h +
O(h^2)\right) \notag \\
&\hskip8cm < x \lvert \left( -\bar {\mathcal L}(x; h) + i\omega
\right)^{-1}\lvert x+\tau h >.
\end{align}
where
\begin{equation}
\lvert x> = \left(\begin{matrix} 0 \\ 1 \\
0 \end{matrix}\right), \;\;\;\;{\rm and}\;\;\;\; \lvert x+\tau h> =
\left( \begin{matrix} \delta_{\tau, 1} \\ 0 \\
\delta_{\tau, -1} \end{matrix}\right).
\end{equation}
We also require the functions
\begin{equation}
\psi_{0}(t, x) \equiv e^{ t {\mathcal L}_m(x, x) } 1(t\ge0),
\;\;\;\;\;\; \psi_{1}(t, x) \equiv e^{t \bar {\mathcal L}(y; h)}(x,
x) 1(t\ge0)
\end{equation}
and the corresponding Fourier transforms
\begin{align}
\hat\psi_{0}(\omega, x) = \left( {v(x)\over 4 h^2}  + i \omega
\right)^{-1}, \;\;\;\;\; \hat\psi_{1}(\omega, x) =  <x \lvert \left(
-\bar {\mathcal L}(x; h) + i\omega \right)^{-1}\lvert x>.
\end{align}

If $\gamma$ is a symbolic sequence, then
\begin{align}
\hat W_m(\gamma, q; \omega) &= \hat\psi_{0}(\omega, \gamma_q)
\prod_{j=0}^{q-1} \hat\phi_0(\omega;
\gamma_j, {\rm sgn}(\gamma_{j+1}-\gamma_j))\\
 \hat {\tilde W}_m(\gamma, q; \omega)
&= \hat\psi_{1}(\omega, \gamma_q) \prod_{j=0}^{q-1}
\hat\phi_1(\omega; \gamma_j, {\rm sgn}(\gamma_{j+1}-\gamma_j)).
\end{align}

Let us estimate the difference between the functions
$\hat\phi_{1}(\omega, x, \tau)$ and $\hat\phi_{2}(\omega, x, \tau)$
assuming that $\omega$ is in the contour ${\mathcal C}_-$ in Fig.
\ref{fig_contour1}. Retaining only terms up to order up to $O(h^3)$,
we find
\begin{eqnarray}
\hat\phi_{0}(\omega, x, \tau) = 1 + {2\mu(x)\tau h \over v(x)} - {4
i \omega h^2 \over v(x)} - 8\mu(x) {i\omega\tau h^3 \over v(x)^2}
-{16\omega^2 h^4 \over v(x)^2}
+ O(h^5). \notag \\
\end{eqnarray}
A lengthy but straightforward calculation which is best carried out
using a symbolic manipulation program, gives
\begin{align}
&\hat\phi_{1}(\omega, x, \tau) = 1+{2\mu(x) \tau h \over v(x)}-{ 4 i
\omega h^2 \over v(x)}  - \big[8\mu(x)  - v'(x) \big]
{i\omega\tau h^3 \over v(x)^2} \notag \\
&\hskip6cm+ r(x)\cdot h^3 \tau
 + i\omega h^4 p(x) - {14\omega^2 h^4 \over v(x)^2} + O(h^5) \notag \\
\end{align}
where
\begin{align}
& r(x) = {1\over 2 v(x)^3} \big[ \mu''(x) v(x) -  4\mu(x)^3 + 2
v'(x) \mu(x)^2 - 2v'(x) v(x) \mu'(x)  \notag \\
&\hskip6cm  - \big(2\mu(x) \mu'(x) + v''(x) v(x) - 2v'(x)^2\big) \mu(x) \big] . \notag \\
&p(x) = {1\over v(x)^3} \big[4\mu(x)^2-2v'(x)\mu(x)+4v(x)\mu'(x)+
v''(x)
v(x)- 2 v'(x)^2  \big].  \notag \\
\end{align}

We have that
\begin{align}
& \sum_{j=0}^{q-1} \bigg(\log \hat\phi_0(\omega; \gamma_j, {\rm
sgn}(\gamma_{j+1}-\gamma_j)) -
 \log \hat\phi_1(\omega; \gamma_j, {\rm
sgn}(\gamma_{j+1}-\gamma_j))\bigg)
\notag \\
&= \sum_{j=0}^{q-1} \bigg(  {i \omega v'(\gamma_j)\over
v(\gamma_j)^2} + r(\gamma_j) \bigg) h^3 {\rm
sgn}(\gamma_{j+1}-\gamma_j) + \big(\lvert \omega\lvert \lvert\lvert
p\lvert\lvert_{\infty} + 2 \lvert \omega\lvert^2
\lvert\lvert v^{-2} \lvert\lvert_{\infty} \big) O(h^4 q) \notag \\
&= i\omega h^2 \log\bigg({v(\gamma_{q})\over v(\gamma_0)}\bigg) +
h^2 \big(R(\gamma_q) - R(\gamma_0)\big) + \big(\lvert \omega\lvert
\lvert\lvert p\lvert\lvert_{\infty} + 2 \lvert \omega\lvert^2
\lvert\lvert v^{-2} \lvert\lvert_{\infty} \big)
O( h^4 q) \notag \\
\end{align}
where $R(x)$ is a primitive of $r(x)$, i.e.
\begin{equation}
R(x) = \int^x r(z) dz.
\end{equation}

We conclude that there is a constant $c>0$ such that
\begin{equation}
\bigg\lvert \int_{{\mathcal C}_-} \bigg( \prod_{j=0}^{q-1}
\hat\phi_0(\omega; \gamma_j, {\rm sgn}(\gamma_{j+1}-\gamma_j)) -
\prod_{j=0}^{q-1} \hat\phi_1(\omega; \gamma_j, {\rm
sgn}(\gamma_{j+1}-\gamma_j)) \bigg) e^{i\omega t}  {d\omega\over
2\pi} \bigg\lvert \leq c h^2.
\end{equation}
for all $q\leq q_{\max}$. Here we use the decay of $e^{i\omega t}$
in the upper half of the complex $\omega$ plane to offset the
$\omega$ dependencies in the integrand. Similar calculations lead to
the following expansions:
\begin{equation}
\hat\psi_{0}(\omega, y) = {4 h^2 \over v(y)}  + O(\omega h^4),
\;\;\;\;\; \hat\psi_{1}(\omega, y) =  {2 h^2 \over v(y)} + O(\omega
h^4) = {1\over 2} \hat\psi_{0}(\omega, y) + O(\omega h^4).
\end{equation}

Since $q<c h^{-2}$ and $\omega \leq \lvert \log h \lvert$, we find
\begin{align}
\bigg\lvert \int_{{\mathcal C}_-} \bigg(  2 G_{m+1}(x, y; \omega) -
G_m(x, y; \omega) \bigg) e^{i\omega t}  {d\omega\over 2\pi}
\bigg\lvert \leq c { \sqrt{q_{\rm max}} \over h }  h^4 \leq c h^2.
\label{eq_udiff3}
\end{align}
This completes the proof of the Lemma and of the Theorem.
\end{proof}

By differentiating with respect to time in equation
\ref{eq_greenfunc}, we find that
\begin{equation}
{\partial \over \partial t} u_{m}(x, y; t) = \int_{{\mathcal C}_-}
{d\omega \over 2 \pi} i\omega G_{m}(x, y; \omega) e^{i\omega t} +
\int_{{\mathcal C}_+} {d\omega \over 2 \pi} i\omega  G_{m}(x, y;
\omega) e^{i\omega t}. \label{eq_greenfunc}
\end{equation}
All the derivations above carry through and we conclude that
\begin{equation}
\bigg\lvert\int_{{\mathcal C}_+} {d\omega \over 2 \pi} i\omega
G_{m}(x, y; \omega) e^{i\omega t}\bigg\lvert \leq c h^2 .
\end{equation}
and also
\begin{align}
\bigg\lvert \int_{{\mathcal C}_-} i\omega \bigg(  2 G_{m+1}(x, y;
\omega) - G_m(x, y; \omega) \bigg) e^{i\omega t}  {d\omega\over
2\pi} \bigg\lvert \leq c { \sqrt{q_{\rm max}} \over h }  h^4 \leq c
h^2. \label{eq_udiff3}
\end{align}
Hence the first time derivatives of the kernel satisfy the same
Cauchy convergence condition as the kernel itself.

\section{Lesser Smooth Coefficients}

In this section we assume coefficients are either H\"older
continuous or obey the conditions in Theorem 2.

\begin{lemma} Let $f(x)$ be a continuous function in $[-L, L]$
satisfying periodic boundary conditions. Then, for all $h>0$, we
have that
\begin{align}
f(x+h_m) = f(x) + h_m \nabla^m_x f(x) + {h^2\over 2} \Delta^m_x
f(x)\\
f(x-h_m) = f(x) - h_m \nabla^m_x f(x) + {h^2\over 2} \Delta^m_x
f(x).
\end{align}
\end{lemma}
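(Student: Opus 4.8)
The plan is to verify both lines as purely algebraic identities; the continuity and periodicity hypotheses play no role beyond ensuring that the translates $f(x\pm h_m)$, and hence the lattice operators $\nabla^m_x$ and $\Delta^m_x$, are well defined at every point of $A$ (at the boundary points this is by the assumed periodic identification of $\pm L$). First I would substitute the definitions
\[
\nabla^m_x f(x) = \frac{f(x+h_m)-f(x-h_m)}{2h_m},\qquad
\Delta^m_x f(x) = \frac{f(x+h_m)+f(x-h_m)-2f(x)}{h_m^2}
\]
into the right-hand side of the first claimed equation. The prefactor $h_m$ cancels the denominator $2h_m$ of $\nabla^m_x f(x)$ and the prefactor $h_m^2/2$ cancels the denominator $h_m^2$ of $\Delta^m_x f(x)$, so the right-hand side becomes $f(x)+\tfrac12\bigl[f(x+h_m)-f(x-h_m)\bigr]+\tfrac12\bigl[f(x+h_m)+f(x-h_m)-2f(x)\bigr]$. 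The $f(x-h_m)$ contributions cancel, the two $f(x+h_m)$ halves combine, and the $f(x)$ terms cancel, leaving exactly $f(x+h_m)$. The second identity is obtained verbatim by the same computation after reversing the sign in front of $\nabla^m_x f(x)$, which amounts to interchanging the roles of $f(x+h_m)$ and $f(x-h_m)$.

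There is no genuine obstacle: this is an elementary rearrangement, and the only caveat is notational, namely that the coefficient written $h^2/2$ in the statement should read $h_m^2/2$ (consistently with the convention $h=h_{m+1}$, $h_m=2h$ fixed earlier). The point of isolating this identity is structural rather than difficult: it is the device that, in the remainder of Section 4, lets one express the discretized action of the coefficients $\sigma^2$ and $\mu$ on nearby lattice sites in terms of $\nabla^m$ and $\Delta^m$ alone, so that the non-smooth perturbation can be controlled by a modulus-of-continuity estimate of the type $(\ref{eq_holder})$ or $(\ref{eq_holderrho})$ rather than by a genuine Taylor expansion.
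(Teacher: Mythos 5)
Your verification is correct and is exactly the ``simple calculation'' the paper itself invokes without writing out: substituting the definitions of $\nabla^m_x$ and $\Delta^m_x$ and cancelling terms. Your remark that the coefficient should read $h_m^2/2$ rather than $h^2/2$ is also accurate, and your observation about the lemma's role (enabling the replacements $v'\to\nabla^m v$, $v''\to\Delta^m v$, etc.\ in Section 4) matches the paper's stated purpose.
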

This is the result of a simple calculation, which is however useful
as it allows one to extend the derivation in the previous section by
making the following replacements:
\begin{align}
v'(x) \to \nabla^m_x v(x), \;\;\;\; v''(x) \to \Delta^m v(x), \;\;\;\; v'''(x) \to 0 \\
\mu'(x) \to \nabla^m_x \mu(x), \;\;\;\; \mu''(x) \to \Delta^m_x
\mu(x).
\end{align}
In fact,
\begin{equation}
\bar {\mathcal L}(x;h) = {1\over h^2} \bar {\mathcal L}_0(x) +
{1\over h} \bar {\mathcal L}_1(x) + \bar {\mathcal L}_2(x) + h \bar
{\mathcal L}_3(x)
\end{equation}
without any $O(h^3)$ corrections as long as one re-defines the
matrices on the right hand side as follows:
\begin{equation} \bar {\mathcal
L}_0(x) = \left(
\begin{matrix}
-v(x) && {1\over2} {v(x)} && 0 \\
{1\over2} {v(x)} && -{v(x)} && {1\over2} {v(x)} \\
0 && {1\over2} {v(x)} && -{v(x)}
\end{matrix}
\right),
\end{equation}
\begin{equation}
\bar {\mathcal L}_1(x) = \left(
\begin{matrix}
-\nabla_x^m v(x) && {1 \over2} \nabla_x^m v(x) - {1\over2} \mu(x) && 0 \\
{1\over2} \mu(x) && 0 && -{1\over2} \mu(x) \\
0 && - {1\over2} \nabla_x^m v(x) + {1\over2}  \mu(x) && \nabla_x^m
v(x)
\end{matrix}\right),
\end{equation}
\begin{equation}
\bar {\mathcal L}_2(x) = \left(
\begin{matrix}
-{1\over2} \Delta_x^m v(x)  && {1\over4} \Delta_x^m v(x) - {1\over2} \nabla_x^m \mu(x)&& 0 \\
0 && 0 && 0 \\
0 && {1\over4} \Delta_x^m v(x) - {1\over2} \nabla_x^m \mu(x) &&
-{1\over2} \Delta_x^m v(x)
\end{matrix}\right).
\end{equation}
and
\begin{equation}
\bar {\mathcal L}_3(x) = \left(
\begin{matrix}
0  && - {1\over4} \Delta_x^m \mu(x)&& 0 \\
0 && 0 && 0 \\
0 && {1\over4} \Delta_x^m \mu(x) && 0
\end{matrix}\right).
\end{equation}
All derivations in the previous section go through formally
unchanged and one arrives at the following expressions
\begin{eqnarray}
\hat\phi_{0}(\omega, x, \tau) = 1 + {2\mu(x)\tau h \over v(x)} - {4
i \omega h^2 \over v(x)} - 8\mu(x) {i\omega\tau h^3 \over v(x)^2}
-{16\omega^2 h^4 \over v(x)^2}
+ O(h^5). \notag \\
\end{eqnarray}
and
\begin{align}
&\hat\phi_{1}(\omega, x, \tau) = 1+{2\mu(x) \tau h \over v(x)}-{ 4 i
\omega h^2 \over v(x)}  - \big[8\mu(x)  - \nabla^m_x v(x) \big]
{i\omega\tau h^3 \over v(x)^2} \notag \\
&\hskip6cm+ r(x)\cdot h^3 \tau
 + i\omega h^4 p(x) - {14\omega^2 h^4 \over v(x)^2} + O(h^5) \notag \\
\end{align}
where
\begin{align}
& r(x) = {1\over 2 v(x)^3} \big[ \Delta^m_x \mu(x) v(x) - 4\mu(x)^3
+ 2
\nabla^m_x v(x) \mu(x)^2 - 2 \nabla^m_x v(x) v(x) \nabla^m_x \mu(x)  \notag \\
&\hskip6cm  - \big(2\mu(x) \nabla^m_x \mu(x) + \Delta^m_x v(x) v(x) - 2 \nabla^m_x v(x)^2\big) \mu(x) \big] . \notag \\
&p(x) = {1\over v(x)^3} \big[4\mu(x)^2-2 \nabla^m_x v(x)\mu(x)+4v(x)
\nabla^m_x \mu(x)+ \Delta^m_x v(x)
v(x)- 2 \nabla^m_x v(x)^2  \big].  \notag \\
\end{align}

We have that
\begin{align}
& \bigg\lvert \sum_{j=0}^{q-1} \bigg(\log \hat\phi_0(\omega;
\gamma_j, {\rm sgn}(\gamma_{j+1}-\gamma_j)) -
 \log \hat\phi_1(\omega; \gamma_j, {\rm
sgn}(\gamma_{j+1}-\gamma_j))\bigg)\bigg\lvert
\notag \\
&\leq \bigg\lvert  \sum_{j=0}^{q-1} \bigg(  {i \omega
\nabla^m_{\gamma_j} v(\gamma_j)\over v(\gamma_j)^2} + r(\gamma_j)
\bigg) h^3 {\rm sgn}(\gamma_{j+1}-\gamma_j)\bigg\lvert + \big(\lvert
\omega\lvert \lvert\lvert p\lvert\lvert_{\infty} + 2 \lvert
\omega\lvert^2
\lvert\lvert v^{-2} \lvert\lvert_{\infty} \big) O(h^4 q)  .\notag \\
&\leq 2L h^2 \sup_{x\in A_m} \bigg\lvert   {i\omega \nabla^m_{x}
v(x)\over v(x)^2} + r(x) \bigg \lvert  + \big(\lvert \omega\lvert
\lvert\lvert p\lvert\lvert_{\infty} + 2 \lvert \omega\lvert^2
\lvert\lvert v^{-2} \lvert\lvert_{\infty} \big) O(h^4 q) \leq c h^{\gamma} .\notag \\
\label{eq_413}
\end{align}
where in the last step we made use of the H\"older continuity
assumptions of Theorem 1. The other bounds staying the same, we
arrive at
\begin{align}
\bigg\lvert \int_{{\mathcal C}_-} \bigg(  2 G_{m+1}(x, y; \omega) -
G_m(x, y; \omega) \bigg) e^{i\omega t}  {d\omega\over 2\pi}
\bigg\lvert \leq c { \sqrt{q_{\rm max}} \over h }  h^{2+\gamma} \leq
c h^\gamma. \label{eq_udiff3}
\end{align}
Under the weaker assumption of Theorem 2, the bound that applies is
instead
\begin{align}
\bigg\lvert \int_{{\mathcal C}_-} \bigg(  2 G_{m+1}(x, y; \omega) -
G_m(x, y; \omega) \bigg) e^{i\omega t}  {d\omega\over 2\pi}
\bigg\lvert \leq c { \sqrt{q_{\rm max}} \over h }  h^{2} \rho(h)
\leq c \rho(h). \label{eq_udiff3}
\end{align}
Similar bounds also extend to the case of the first time derivative,
since multiplication by a factor $i\omega$ inside of the contour
integral is immaterial as far as establishing a bound of this sort
is concerned. This completes the proof of Theorem 1 and Theorem 2.

\section{Explicit Euler Scheme}

In this section we prove Theorem \ref{theo_diffdtker}. A Dyson
expansion can also be obtained for the time-discretized kernel and
has the form
\begin{align}
u_m^{\delta t}(y_1, y_2; t) =& {1\over h_m} \sum_{q=1}^\infty
\sum_{\gamma\in \Gamma_m : \gamma_0 = x, \gamma_{q} = y}
 \sum_{k_1 = 1}^{N} \sum_{k_2 = k_1 + 1}^{N} ...
\sum_{k_{q} = k_{q-1} + 1}^{N}  \notag \\
& \bigg(1 + \delta t \LLL_m(\gamma_0, \gamma_0)\bigg)^{k_{1}-1}
(\delta t)^q \prod_{j=1}^{q} \LLL_m(\gamma_{j-1}, \gamma_j) \bigg(1
+ \delta t \LLL_m(\gamma_j, \gamma_j)\bigg)^{k_{j+1} - k_{j} - 1}
\label{eq_upathint}
\end{align}
where $t_{q+1} = t$ and $k_{q+1}=N$. In this case, the propagator
can be expressed through a Fourier integral as follows:
\begin{equation}
u_m^{\delta t}(y_1, y_2; t) = \int_{-{\pi\over\delta
t}}^{\pi\over\delta t} G_m^{\delta t}(y_1, y_2; \omega) e^{i\omega
t} {d\omega\over 2\pi}
\end{equation}
where
\begin{equation}
G_m^{\delta t}(y_1, y_2; \omega) = \delta t \sum_{j = 0}^{t\over
\delta t} u_m^{\delta t}(y_1, y_2; j \delta t) e^{ - i\omega j
\delta t}.
\end{equation}
The propagator can also be represented as the limit
\begin{equation}
u_m^{\delta t}(y_1, y_2; t) = \lim_{H\to\infty} \int_{{\mathcal
C}_H} G_m^{\delta t}(y_1, y_2; \omega) e^{i\omega t} {d\omega\over
2\pi} \label{eq_greenfuncdt}
\end{equation}
where ${\mathcal C}_H$ is the contour in Fig. \ref{fig_contour3}.
This is due to the fact that the integral along the segments $BC$
and $DA$ are the negative of each other, while the integral over
$CD$ tends to zero exponentially fast as $\Im(\omega) \to \infty$,
where $\Im(\omega)$ is the imaginary part of $\omega$. Using
Cauchy's theorem, the contour in Fig. \ref{fig_contour3} can be
deformed into the contour in Fig. \ref{fig_contour2}. To estimate
the discrepancy between the time-discretized kernel and the
continuous time one, one can thus compare the Green's function along
such contour. Again, the only arc that requires detailed attention
is the arc $BCD$, as the integral over rest of the contour of
integration can be bounded from above as in the previous section.

\begin{figure}
\begin{center}
    \includegraphics[width = 12cm]{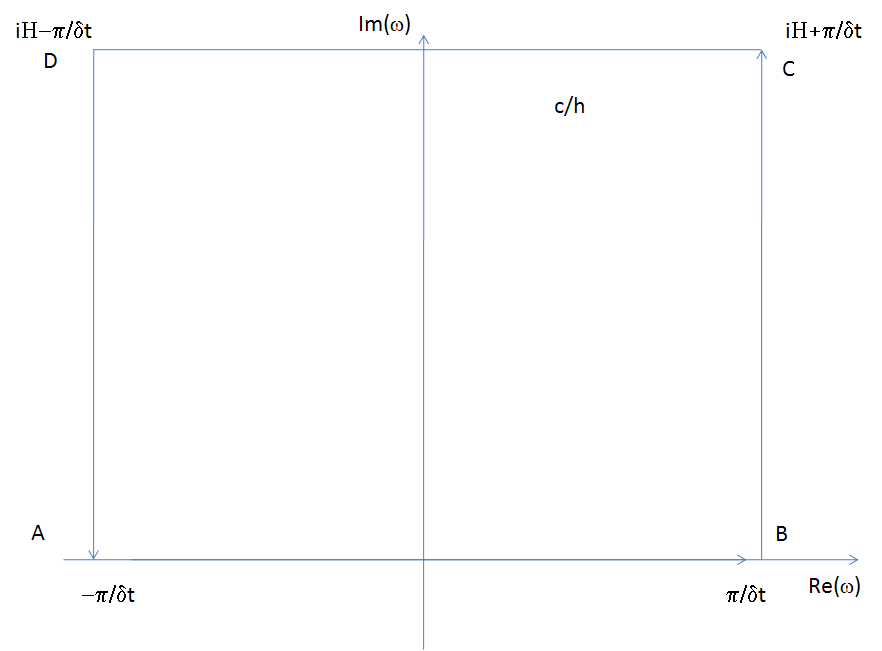}
    \caption{Contour of integration for the integral in (\ref{eq_greenfuncdt}).}
    \label{fig_contour3}
\end{center}
\end{figure}

Let $h = h_m$ and let us introduce the two functions
\begin{align}
\phi_{0}(t, x, \tau) &\equiv 2 {\mathcal L}_m(x, x+\tau h)
e^{ t {\mathcal L}_m(x, x)} 1(t\ge0), \\
\phi_{\delta t}(j, x, \tau) &\equiv 2 {\mathcal L}_m(x, x+\tau h)
\big( 1 + \delta t {\mathcal L}_m(x, x) \big)^{j-1}.
\end{align}
and the corresponding Fourier transforms
\begin{align}
\hat\phi_{0}(\omega, x, \tau) &= \int_0^\infty \phi_{0}(t, x, \tau)
e^{-i\omega t} {d\omega\over 2\pi} =  \left({v(x)\over h^2} + \tau {
\mu(x)\over h}\right) \left( {v(x)\over
h^2} + i \omega \right)^{-1} \\
\hat\phi_{\delta t}(\omega, x, \tau) &= \sum_{j=0}^{t\over \delta t}
\phi_{\delta t}(j, x, \tau) e^{-i\omega j\delta t} =
\left({v(x)\over h^2} + \tau {\mu(x)\over h}\right) \left(
e^{i\omega\delta t} - 1 + \delta t {v(x)\over h^2} \right)^{-1}.
\end{align}
We have that
\begin{align}
\hat\phi_{\delta t}(\omega, x, \tau) &=
\left({v(x)\over h^2} + \tau {\mu(x)\over h}\right)
\left(
i\omega  + {v(x)\over h^2} - {\omega^2 \over 2} \delta t + O(\delta t^2)
\right)^{-1} \notag \\
& = \hat\phi_{0}(\omega, x, \tau) + {\omega^2\over 2 v(x)} h^2 \delta t + O(h^2 \delta t^2).
 = \hat\phi_{0}(\omega, x, \tau) +  O(h^4),
 \end{align}
where the last step uses the fact that $\delta t = O(h^2)$.

Let us also introduce the functions
\begin{align}
\psi_{0}(t, x, \tau) &\equiv e^{ t {\mathcal L}_m(x, x)} 1(t\ge0),
\hskip1cm \psi_{\delta t}(j, x, \tau) \equiv \sum_{k = 1}^j \big( 1
+ \delta t {\mathcal L}_m(x, x) \big)^{j-1}.
\end{align}
and the corresponding Fourier transforms
\begin{align}
\hat\psi_{0}(\omega, x, \tau) &= \left( {v(x)\over h^2} + i \omega
\right)^{-1}, \hskip1cm \hat\psi_{\delta t}(\omega, x, \tau) =
\left( e^{i\omega\delta t} - 1 + \delta t {v(x)\over h^2}
\right)^{-1}.
\end{align}
Again we find that
\begin{align}
\hat\psi_{0}(\omega, x, \tau) = \hat\psi_{\delta t}(\omega, x, \tau)
 +  O(h^4).
\end{align}

If $\gamma$ is a symbolic sequence, then let us set
\begin{align}
\hat W_m(\gamma, q; \omega) &= \hat\psi_{0}(\omega, \gamma_q)
\prod_{j=0}^{q-1} \hat\phi_0(\omega;
\gamma_j, {\rm sgn}(\gamma_{j+1}-\gamma_j))\\
\hat {W}_m^{\delta t}(\gamma, q; \omega)
&=  \hat\psi_{\delta t}(\omega, \gamma_q) \prod_{j=0}^{q-1}
\hat\phi_{\delta t}(\omega; \gamma_j, {\rm sgn}(\gamma_{j+1}-\gamma_j)).
\end{align}
We have that
\begin{align}
G^{\delta t}_{m}(x, y; \omega) - G_m(x, y; \omega) = & {1\over h}
\sum_{q=1}^\infty 2 ^{-q} \sum_{
\begin{matrix}
\gamma\in \Gamma_m : \gamma_0 = x, \gamma_{q} = y\\
\lvert\gamma_j - \gamma_{j-1}\lvert = 1 \forall j\ge1
\end{matrix}}
\left( \hat {W}_m^{\delta t} (\gamma, q; \omega) - \hat W_m (\gamma,
q; \omega)\right). \label{eq_udiff}
\end{align}
The integration over the contour in Fig.  \ref{fig_contour2} can
again be split into an integration over the countour ${\mathcal
C}_-$ and an integration over ${\mathcal C}_+$. The integral over
${\mathcal C}_+$ can be bounded from above thanks to Lemma
\ref{lemma_cplus}. Furthermore,  we have that
\begin{align}
&\bigg\lvert \int_{{\mathcal C}_-} \bigg( G^{\delta t}_{m}(x, y;
\omega) - G_m(x, y; \omega) \bigg)
e^{i\omega t}  {d\omega\over 2\pi} \bigg\lvert \notag \\
&\leq c h^{-1} \sqrt{q_{\rm max}} \max_{
\begin{matrix}
q, \gamma\in \Gamma_m : \gamma_0 = x, \gamma_{q} = y\\
\lvert\gamma_j - \gamma_{j-1}\lvert = 1 \forall j\ge1
\end{matrix}}
\bigg \lvert \int_{{\mathcal C}_-} \bigg( \hat {W}_m^{\delta t}
(\gamma, q; \omega) - \hat W_m (\gamma, q; \omega) \bigg) e^{i\omega
t}  {d\omega\over 2\pi}  \bigg\lvert. \notag
\\
& \leq c h^2 \label{eq_udiff2}
\end{align}

To bound the time derivative, we have to consider
\begin{align}
&\bigg\lvert \int_{{\mathcal C}_-} \bigg( {e^{i\omega\delta t} - 1
\over \delta t} G^{\delta t}_{m}(x, y; \omega) - i \omega G_m(x, y;
\omega) \bigg)
e^{i\omega t}  {d\omega\over 2\pi} \bigg\lvert \notag \\
\end{align}
But, since $\delta t = O(h^2)$, also this difference is $O(h^2)$.

\section{Conclusions}

We obtained bounds on convergence rates for explicit discretization
schemes to the kernel of one-dimensional diffusion equations with
continuous coefficients. We consider both semidiscrete
triangulations with continuous time and explicit Euler schemes with
time step small enough for the method to be stable. The proof is
constructive and based on a new technique of path conditioning for
Markov chains and a renormalization group argument. Convergence
rates depend on the degree of smoothness and H\"older
differentiability of the coefficients. The method is of more general
applicability and will be extended in future work.

\bibliographystyle{giwi}
\bibliography{holder}

\end{document}